\numberwithin{equation}{section}
\newtheorem{theorem}{Theorem}[section]
\newtheorem{lemma}[theorem]{Lemma}
\newtheorem{remark}[theorem]{Remark}
\newtheorem{question}[theorem]{Question}
\newtheorem*{theorem*}{Theorem}
\newtheorem*{lemma*}{Lemma}
\newtheorem*{proposition*}{Proposition}
\newtheorem*{corollary*}{Corollary}
\renewcommand\tilde{\widetilde}
\def\R{\mathbb{R}}
\def\EE{\mathbb{E}}
\renewcommand{\phi}{\varphi}
\def\1{\mathbf{1}}
\def\XXint#1#2#3{{\setbox0=\hbox{$#1{#2#3}{\int}$ }
\vcenter{\hbox{$#2#3$ }}\kern-.57\wd0}}
\def\eps{\varepsilon}
\newcommand{\bra}[1]{\left( #1 \right)}
\newcommand{\cur}[1]{\left\{ #1 \right\}}
 \newcommand{\abs}[1]{\left\vert#1\right\vert}
\newcommand{\mres}{\mathbin{\vrule height 1.6ex depth 0pt width
0.13ex\vrule height 0.13ex depth 0pt width 1.3ex}}
\begin{document}
\title[There is no stationary $p$-c. m. Poisson matching in 2D]{There is no stationary $p$-cyclically monotone Poisson matching in 2D}

%\author{Francesco Mattesini}
\author{  Martin Huesmann \address[Martin Huesmann]{Universit\"at M\"unster,  Germany} \email{martin.huesmann@uni-muenster.de} \hspace*{0.5cm} Francesco Mattesini \address[Francesco Mattesini]{Universit\"at M\"unster \& MPI Leipzig,  Germany} \email{francesco.mattesini@uni-muenster.de}  \hspace*{0.5cm} Felix Otto \address[Felix Otto]{MPI Leipzig, Germany} \email{Felix.Otto@mis.mpg.de} } 
\thanks{All authors are supported by the Deutsche Forschungsgemeinschaft (DFG, German Research Foundation) through the SPP 2265 {\it Random Geometric Systems}. MH and FM have been funded by the Deutsche Forschungsgemeinschaft (DFG, German Research Foundation) under Germany's Excellence Strategy EXC 2044 -390685587, Mathematics M\"unster: Dynamics--Geometry--Structure. FM has been funded by the Max Planck Institute for Mathematics in the Sciences.}

\begin{abstract}
%We extend the previous result from \cite{HMO21} showing that for $p>1$ there is no $p$-cyclically monotone stationary matching of two independent Poisson processes in dimension $d=2$. The proof combines the $p$-harmonic approximation result from \cite[Theorem 1.1]{koch23} with local asymptotics for the two-dimensional matching problem. Moreover, we prove the local upper bounds in the case $p>1$ which, to the best of our knowledge, are not readily available in the current literature.
We show that for $p>1$ there is no $p$-cyclically monotone stationary matching of two independent Poisson processes in dimension $d=2$. The proof combines the $p$-harmonic approximation result from \cite[Theorem 1.1]{koch23} with local asymptotics for the two-dimensional matching problem. 
Moreover, we prove a.s.\ local upper bounds of the correct order in the case $p>1$, which, to the best of our knowledge, are not readily available in the current literature.
\end{abstract}

\date{\today}
\maketitle

\section{Introduction}
%\cm{TO DO:\\
%$\rightsquigarrow$ connection to the work of Gutierrez-Montanari for the $L^\infty$ estimate \cite{Gut22}\\
%$\rightsquigarrow$ notation for the distance on the torus}
%In the following I will denote by $W_{p, \Omega}$ the $p$-Wasserstein distance restricted to $\Omega$.
Let $\{X\}, \{Y\} \subset \mathbb{R}^d$ be two locally finite\footnote{and thus countable} random point sets. We consider their matching, that is a (random) bijection from $\{X\}$ to $\{Y\}$. More precisely we will focus on dimension $d=2$ and we are primarily interested in the case where the two random point sets are given by two independent Poisson point process of unit intensity and the map $T$ is $p$-locally optimal for $p \ge 1$, meaning that for any other bijection $\tilde T$ that differs from $T$ only on a finite number of points
\begin{equation}\label{eq:plocopt}
\sum_X (|T(X) - X|^p - |\tilde{T}(X) - X|^p) \le 0. 
\end{equation}
Since the sum in \eqref{eq:plocopt} is finite, the latter provides a natural connection to the optimal transport problem between the measures 
\begin{equation}\label{eq:empmeasp}
\mu := \sum_X \delta_X \quad \text{and}\quad \nu := \sum_Y \delta_Y
\end{equation}
related via $T_{\#} \mu = \nu$. However, note that the map $T$ cannot be viewed as a usual minimizer in the optimal transport problem due to the (typically) infinite number of points.

\medskip
Let us now be more specific on the random setting we consider. We assume that the $\sigma$-algebra generated by $(\{X\},\{Y\},T)$ is rich enough so that the numbers of matched pairs $(X,Y) \in U\times V$ of any two Lebesgue-measurable sets $U,V \subset \mathbb{R}^d$ (with $U$ or $V$ having finite Lebesgue measure\footnote{so that the following number is finite})
\[
N_{U,V} := \# \{ (X,Y) \in U \times V \;|\;Y=T(X)\} \in \{0,1,\dots\}
\]
are measurable. Moreover, we assume that the law of the triple $(\{X\},\{Y\},T)$ is stationary, that means it is invariant under the action of the additive group $\mathbb{Z}^d$
\begin{equation}\label{eq:actionadgroup}
(\{X\},\{Y\},T) \mapsto (\{\bar x + X\},\{\bar x + Y\},T(\cdot - \bar x)+\bar x) \quad \text{for $\bar x \in \mathbb{Z}^d$.}
\end{equation}
Note that stationarity is a structural assumption which will allow us to say that for any shift vector $\bar x$, the random natural numbers $N_{\bar x + U, \bar x+ V}$ and $N_{U,V}$ have the same distribution. Furthermore, we make the assumption that the action \eqref{eq:actionadgroup} is ergodic. 
%As a consequence of the last assumption, an application of the Birkhoff's ergodic theorem implies
%\[
%\lim_{R \uparrow \infty} \frac1{{R}^d} \sum_{\bar x \in \mathbb{Z}^d \cap [0,R)^d} N_{\bar x + U, \bar x+ V} = \mathbb{E} N_{U,V} \quad \text{almost surely}.
%\]

\medskip
The aim of this paper is to explore the geometric properties of the matching $T$ between two independent Poisson point processes in  dimension $2$. A matching in $\R^2$ is called {\it planar} if for any choice of points $X,X'$, the line segments connecting $X$ to $T(X)$ and $X'$ to $T(X')$ do not intersect. In 2002 the following question was proposed by Peres in \cite{HoPePeSc09}. 
\begin{question}\label{question1}
For two independent Poisson processes of intensity one does there exist a stationary planar matching?
\end{question}
It has been shown by Holroyd in \cite{Ho11}, that there is no translation-invariant planar matching on the strip $\R\times [0,1)$. Yet,  Question \ref{question1} is still unsolved in $\R^2$ and it is far from clear what its answer should be. Again in \cite{Ho11}, it was observed by Holroyd, that by the triangle inequality the $1$-local optimality condition \eqref{eq:plocopt} implies planarity. Hence, it is natural to consider the following modification of Question \ref{question1}, which has been proposed in \cite{HoJaWa20}.
\begin{question}\label{question2}
For two independent Poisson processes of intensity one does there exist a stationary and $p$-locally optimal matching?
\end{question}
Question \ref{question2} is well understood for dimension $d=1$ and $d \ge 3$, e.g.\ see \cite[Theorem 2 and Theorem 7]{HoJaWa20} or \cite{Hu16}.  Nevertheless,  the two dimensional setting is partially unsolved. In \cite[Theorem 7]{HoJaWa20} it has been shown that stationary $p$-locally optimal matchings exist for $p < 1$. Our result, together with the one obtained in \cite{HMO21}, complements the one of \cite{HoJaWa20} in the regime $p>1$ and $d=2$, leaving unsolved the case $p=1$ and of course the question on planarity.
%
%extend the non existence result of \cite{HMO21} to the setting of $p$-minimal matching.  To this end we recall the terminology and basic assumptions on the random ensemble we consider. 

\begin{theorem}\label{thm:main}
For $d = 2$ and $p>1$, there exists no stationary and ergodic ensemble of $(\{X\},\{Y\},T)$, where $\{X\}, \{Y\}$ are independent Poisson point processes and $T$ is a $p$-cyclically monotone bijection of $\{X\}$ and $\{Y\}$.
\end{theorem}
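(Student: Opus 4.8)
The plan is to argue by contradiction. Suppose an ensemble $(\{X\},\{Y\},T)$ as in the statement exists; passing to an ergodic component, we may assume the $\mathbb{Z}^2$-action is ergodic, which is what will let us promote distributional statements to almost sure pathwise ones via Borel--Cantelli along dyadic scales. Write $V(x):=T(x)-x$ for the displacement and, for the ball $B_R$ centred at the origin,
\[
D_R \;:=\; \inf_{c\in\R^2}\,\frac1{R^2}\sum_{X\in B_R}|V(X)-c|^p
\]
for the centred $p$-energy density at scale $R$. The first step --- and the one I expect to be the main obstacle, since it is exactly the input the abstract flags as not being in the literature --- is the almost sure local upper bound
\[
\frac1{R^2}\sum_{X\in B_R}|V(X)|^p \;\les\; (\log R)^{p/2}\qquad\text{for all large }R.
\]
I would prove this by building, on each dyadic box, an explicit competitor matching of cost $\les R^2(\log R)^{p/2}$ --- the classical Ajtai--Koml\'os--Tusn\'ady hierarchical/dyadic matching adapted to the $L^p$ cost, together with a cheap ``closing up'' of the edges crossing the boundary --- and then using $p$-cyclical monotonicity of $T$, i.e.\ the local optimality \eqref{eq:plocopt}, to transfer the bound to $T$ up to boundary terms of lower order; that the empirical measures have nearly equal mass on each box and that the estimate is uniform in $R$ are handled by Poisson concentration plus Borel--Cantelli. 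In particular $D_R\les(\log R)^{p/2}=o(R^\delta)$ for every $\delta>0$, and after rescaling $B_\rho$ to unit size the energy density satisfies $D_\rho/\rho^p\to0$.

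The heart of the argument is a Campanato-type iteration driven by the $p$-harmonic approximation of \cite[Theorem~1.1]{koch23}. At any large scale $\rho$ the smallness hypotheses of that result are met --- the rescaled energy density tends to $0$ by the previous step, and the discrepancy of the Poisson process at scale $\rho$ in the relevant negative norm is, a.s.\ and uniformly along dyadic scales, bounded by a negative power of $\rho$ (again by concentration and Borel--Cantelli) --- so one obtains a $p$-harmonic comparison map $\nabla h$ on $B_{\rho/2}$ with $\frac1{\rho^2}\sum_{X\in B_{\rho/2}}|V(X)-\nabla h(X)|^p$ small relative to $D_\rho$. Feeding in interior $C^{1,\alpha}$-regularity for the $p$-harmonic equation (equivalently, its linearisation around the local tilt, whose ellipticity ratio is scale- and tilt-independent) yields a one-step excess-decay estimate of the form
\[
D_{\theta\rho}\;\le\;C\bigl(\theta^{\alpha p}+\varepsilon(\rho)\bigr)\,D_\rho \;+\; C\,\mathrm{Err}(\rho),
\]
with $\varepsilon(\rho)\to0$, $\mathrm{Err}(\rho)$ summable along dyadic scales, and $\theta\in(0,1)$ free. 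Choosing $\theta$ so small that $C\theta^{\alpha p}<\tfrac12$ and iterating from a large scale $R$ down to a large but fixed $r$, the geometric factor annihilates the sub-polynomial initial energy $D_R$ while the accumulated errors are controlled by a negative power of $r$; letting $R\to\infty$ and then $r\to\infty$ gives the asymptotic rigidity
\[
D_r\;\xrightarrow[\ r\to\infty\ ]{}\;0\qquad\text{a.s.},
\]
i.e.\ at large scales the displacement field becomes flat. (Morally this is a Liouville principle: a stationary $p$-harmonic gradient field with sublinear potential is constant.)

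This contradicts the lower-order asymptotics of the two-dimensional matching. By an Ajtai--Koml\'os--Tusn\'ady-type lower bound --- available for the matching problem via \cite{HMO21}, in its sharp, scale-invariant and mean-subtracted form, which is really a lower bound on an $H^{-1}$-type norm of the density fluctuation and hence insensitive to adding a constant field --- any matching of two independent unit-intensity Poisson processes in $d=2$ satisfies $D_R\ges(\log R)^{p/2}$ a.s.\ for large $R$. Since $(\log r)^{p/2}\to\infty$ while $D_r\to0$, this is the desired contradiction, and no such ensemble exists. If one prefers to avoid the mean-subtracted form of the lower bound, the rigidity $D_r\to0$ combined with the uncentred lower bound of the first step forces the spatial mean $\tfrac1{r^2}\sum_{X\in B_r}V(X)$ to have norm $\ges(\log r)^{1/2}\to\infty$, which is incompatible with stationarity and ergodicity. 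Finally, the mechanism genuinely needs $p>1$: for $p=1$ the Euler--Lagrange operator degenerates and there is no $p$-harmonic approximation with usable interior regularity, consistent with the $p=1$ case (and the planarity question) remaining open.
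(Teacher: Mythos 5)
Your high-level plan (contradiction via $p$-harmonic approximation plus two-dimensional matching asymptotics) is in the right spirit, but the central step of the argument does not go through, and the contradiction you set up is not the one the paper uses.

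The decisive gap is in the Campanato-style excess decay. You claim that ``the discrepancy of the Poisson process at scale $\rho$ in the relevant negative norm is, a.s.\ and uniformly along dyadic scales, bounded by a negative power of $\rho$'', and that the accumulated errors $\mathrm{Err}(\rho)$ in the one-step decay estimate are summable and controlled by a negative power of $r$. This is false in $d=2$: the error term that the $p$-harmonic approximation \cite[Theorem~1.1]{koch23} produces is precisely the data term $D_p(\rho)$, and for a Poisson process in $d=2$ this term grows like $(\log\rho)^{p/2}$ (this is exactly what Theorem~\ref{lem:uppdata} asserts, and the order is sharp by the AKT lower asymptotics). It is only the \emph{rescaled} quantity $D_p(\rho)/\rho^p$ that is small --- small enough to trigger the perturbative regime --- but once the approximation is applied the unrescaled $D_p(\rho)$ reappears linearly in the excess. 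Iterating downward from $R$ to a fixed $r$ then accumulates errors of order $(\log R)^{p/2}$, so your excess $D_r$ does \emph{not} tend to $0$; it is bounded below by essentially the same $(\log r)^{p/2}$ that you want to contradict. There is therefore no rigidity and no Liouville mechanism available here, and the argument collapses at exactly this point. (This is also why your hoped-for first step --- an a.s.\ bound $E_p(R)\lesssim(\log R)^{p/2}$ by an explicit AKT-type competitor plus $p$-cyclic monotonicity --- is more delicate than you indicate: to compare $T$ on $B_R$ to a competitor one must control how many points near $\partial B_R$ are transported far, which already requires a uniform $L^\infty$ bound on the displacement; in the paper this is Lemma~\ref{lem:Linfty} and it precedes, rather than follows from, the $(\log R)^{p/2}$ energy bound.)

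The paper's proof exploits a different, more robust contradiction. One first proves $E_p(R)\le C\ln^{p/2}R$ via an \emph{upward} iteration $E_p(R)\le\tau E_p(32R)+C_\tau\ln^{p/2}R$ (Lemma~\ref{lem:harmonicapprox}), whose starting point at scale $32^kR$ is the $L^\infty$ bound $E_p\le\eps R^p$ from Lemma~\ref{lem:Linfty}; the geometric factor $\tau^k$ kills the initial $\eps(32^kR)^p$ provided $32^p\tau<1$, while the growing data terms $\ln^{p/2}(32^lR)$ still sum because they are hit by $\tau^l$. Crucially one does \emph{not} try to get $E_p(R)\to0$ --- which is impossible --- but only $E_p(R)=O(\ln^{p/2}R)$. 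The contradiction is then obtained on the $L^1$ level (Lemma~\ref{lem:upperboundL1}): the ergodic $L^0$ estimate says that only an $\eps$-fraction of the points are transported further than a fixed $L$, and combined with H\"older this trades the $L^p$ bound for
$\frac1{R^d}\sum|T(X)-X|\le CL+\eps^{1/p'}E_p(R)^{1/p}=o(\ln^{1/2}R)$,
which contradicts the known $\Omega(\ln^{1/2}R)$ lower bound for the uncentered $L^1$ displacement \cite[Lemma~2.4]{HMO21}. The extra gain that your argument is missing is precisely this $\eps^{1/p'}$ prefactor coming from the $L^0\!\to\!L^p$ interpolation, not a decay of the excess at small scales.
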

Before commenting on the proof of Theorem \ref{thm:main} let us add  some remarks on extensions and variants of Theorem \ref{thm:main}. 
\begin{remark}
Theorem \ref{thm:main} remains true if we replace the bijection $T$ by the a priori more general object of a stationary coupling $Q$.
This can be seen for instance by directly writing the proof in terms of couplings which essentially only requires notational changes.
\end{remark}

\begin{remark}\label{rem:remark1.3}
Natural variants of stationary matchings are given by stationary allocations of a point process $\{X\}$, i.e.\ a stationary map $T:\R^d\to\{X\}$ such that $\mathsf
{Leb}(T^{-1}(X))$ equals $\EE[\#\{X\in(0,1)^d\}]^{-1}$, e.g.\ see \cite{HoHoPe06, ChPePeRo10,MaTi16, HuSt13}.

Mimickicking the proof of Theorem \ref{thm:main}, one can show that in $d=2$ there is no locally $p$-optimal stationary allocation to a Poisson process. The only place which will require minor changes is the $L^\infty$ estimate Lemma \ref{lem:Linfty}.
\end{remark}

\begin{remark}
Since by ergodicity and stationarity we can argue on a pathwise level via the $p$-harmonic approximation Theorem (cf.\ Section \ref{sec:mainsteps}), we do not use many particular features of the Poisson point processes $\mu$ and $\nu$ in the proof of Theorem \ref{thm:main}.

We mainly use two properties: The first property is concentration around the mean. The second property is more involved. Denote by $W_p$ the $L^p$ Wasserstein distance. We use that\footnote{Given a measure $\mu$ on a $\mathbb{R}^d$ and a subset $\Gamma \subseteq \mathbb{R}^d$ we denote its restriction to $\Gamma$ by $\mu \mres \Gamma(\ \cdot \ ) := \mu(\ \cdot \ \cap \Gamma)$.}
$\frac1{R^d}W_p(\mu \mres B_R,\frac{\mu(B_R)}{|B_R|}\mathsf{Leb})$ diverges at the same rate for $R\to\infty$ as $\frac{1}{R^d} W_{p-\eps}(\mu \mres B_R,\frac{\mu(B_R)}{|B_R|}\mathsf{Leb})$
for some $\eps>0$.
\end{remark}

The proof of Theorem \ref{thm:main} goes along the same lines as in \cite[Theorem 1.1]{HMO21}, see also \cite[Section 1.1]{HMO21}. We already remark here that there are two new ingredients: The $p$-harmonic approximation theorem and almost sure upper asymptotics for the matching cost. The former, already shown in \cite[Theorem 1.1]{koch23}, states that the displacement $T(X) - X$ is close (in the $p$-norm distance) to a $p'$-harmonic\footnote{we denote by $p'$ the conjugate exponent of $p$, i. e. $\frac1p + \frac1{p'} = 1$} gradient field $|\nabla \Phi|^{p'-2}\nabla \Phi$ provided that we are in a perturbative regime, which is quantified in terms of smallnes of the local energy
\begin{equation}\label{eq:energytermp}
E_p (R) := \frac 1 {R^d} \sum_{X \in B_R \;\text{or}\;T(X) \in B_R} |T(X) - X|^p
\end{equation}
and of the data term, that is the distance of $\mu \mres B_R$ and $\nu \mres B_R$ to the Lebesgue measure on the ball $B_R$ of radius $R$\footnote{we tacitly identify the (random) number density $n_\mu$ with the uniform measure $n_\mu \mathrm{d} x$.}
\begin{equation}\label{eq:datatermp}
D_p(R) := \frac1{R^d} W^p_{p,B_R}(\mu, n_\mu) + \frac{R^p}{n_\mu}(n_\mu - 1)^p + \frac1{R^d} W^p_{p,B_R}(\nu, n_\nu) + \frac{R^p}{n_\nu}(n_\nu - 1)^p,
\end{equation}
where $n_\mu = \frac{\# \{X \in B_R\}}{|B_R|}$ and $n_\nu = \frac{\# \{Y \in B_R\}}{|B_R|}$, and $W_{p,\Gamma} (\mu, \nu) = W_p (\mu \mres \Gamma, \nu \mres \Gamma)$ for a Borel set $\Gamma \subset \R^d$. The latter, which we state here, is our second main result and concerns concentration properties of the matching cost. 

\begin{theorem}\label{lem:uppdata}
Let $\mu, \nu$ denote two independent Poisson point processes in $\mathbb{R}^d$ of unit intensity. There exists a constant $C$, and a random radius $r_*<\infty$ a.~s. such that for a (random) sequence of approximately dyadic radii\footnote{we say that a radius $R$ is approximately dyadic if there exists a dyadic radius $R'$ and a constant $C\in\big(\frac12,2\big)$ such that $R = C R'$.} $R \ge r_*$
\begin{equation}\label{eq:thmuppdata}
D_p (R) \le C \begin{cases}
\ln^\frac p2 R & \text{if $d=2$},\\
1 & \text{if $d\ge 3$}.
\end{cases}
\end{equation}
\end{theorem}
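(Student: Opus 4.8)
The plan is to bound the four summands of $D_p(R)$ individually --- the density--fluctuation terms $\frac{R^p}{n_\mu}(n_\mu-1)^p$, $\frac{R^p}{n_\nu}(n_\nu-1)^p$ and the local matching terms $\frac1{R^d}W^p_{p,B_R}(\mu,n_\mu)$, $\frac1{R^d}W^p_{p,B_R}(\nu,n_\nu)$ --- first in expectation at dyadic radii, and then to pass to an almost sure bound along a subsequence. The density terms use only that $\#\{X\in B_R\}$ is Poisson with mean $|B_R|\asymp R^d$: the exponential Poisson concentration inequality gives $\mathbb{P}(|n_\mu-1|>tR^{-d/2})\lesssim e^{-ct^2}$ for $1\le t\lesssim R^{d/2}$, and with $t\asymp\sqrt{\ln R}$ a Borel--Cantelli argument along dyadic radii yields $|n_\mu-1|\lesssim R^{-d/2}\sqrt{\ln R}$ and $n_\mu\ge\tfrac12$ eventually, almost surely (and likewise for $\nu$); hence $\frac{R^p}{n_\mu}(n_\mu-1)^p\lesssim R^{p(1-d/2)}\ln^{p/2}R$, which is $\lesssim\ln^{p/2}R$ for $d=2$ and tends to $0$ for $d\ge3$. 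This part in fact holds for all large (approximately) dyadic radii.

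The core is the expected local matching cost. First regularize: convolving $\mu\mres B_R$ with a fixed unit bump gives a density $\rho_1$ supported in $B_R$ of the same total mass with $W^p_p(\mu\mres B_R,\rho_1)\lesssim R^d$, so it remains to estimate $W^p_p(\rho_1,n_\mu\mathsf{Leb}\mres B_R)$. Let $\phi$ solve $-\Delta\phi=\rho_1-n_\mu$ in $B_R$ with homogeneous Neumann data. On the event $n_\mu\ge\tfrac12$ the target density is bounded below, and applying the triangle inequality once through the midpoint measure $\tfrac12\rho_1+\tfrac12 n_\mu\mathsf{Leb}$ --- whose density is bounded below, so that the interpolating densities in a Dacorogna--Moser coupling stay bounded below uniformly in time even for $p\ge2$ --- one obtains $W^p_p(\rho_1,n_\mu\mathsf{Leb}\mres B_R)\lesssim_p\int_{B_R}|\nabla\phi|^p$. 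Since $\int_{B_R}\nabla_xG_{B_R}(x,y)\,\mathrm{d}y\equiv0$ for the Neumann Green's function $G_{B_R}$, the constant $n_\mu$ drops out and $\nabla\phi(x)=\int_{B_R}\nabla_xG_{B_R}(x,y)\,\rho_1(y)\,\mathrm{d}y$ is a \emph{linear} statistic of the Poisson process, with variance $\int_{B_R}|\nabla_xG_{B_R}(x,y)|^2\,\mathrm{d}y$ up to the smoothing --- this is $\asymp\ln R$ for $d=2$ and $\asymp1$ for $d\ge3$, uniformly in $x$ at distance $\gtrsim1$ from $\partial B_R$. Bounding the $p$-th moment of a centred Poisson linear statistic by its variance to the power $p/2$ (Jensen for $p\le2$; a Rosenthal-type Poisson moment inequality for $p>2$, the extra term $\int|\nabla_xG_{B_R}(x,\cdot)|^p$ being of lower or equal order when $d=2$), and treating the $O(R^{d-1})$-volume boundary shell by a crude estimate, gives $\mathbb{E}[\frac1{R^d}W^p_{p,B_R}(\mu,n_\mu)]\lesssim_p\ln^{p/2}R$ for $d=2$ and $\lesssim_p1$ for $d\ge3$, and the same for $\nu$. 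For $1<p\le2$ one may instead avoid the PDE and use $W^p_p\le(\#\{X\in B_R\})^{1-p/2}(W^2_{2,B_R}(\mu,n_\mu))^{p/2}$ together with the classical two-dimensional $W_2$ matching estimate; so $p>2$ is the range that genuinely requires the argument above.

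Combining, $\mathbb{E}[D_p(2^k)]\lesssim\ln^{p/2}(2^k)$ for $d=2$ and $\lesssim1$ for $d\ge3$. To upgrade this to an almost sure subsequential bound with a \emph{deterministic} constant, set $Y:=\liminf_k D_p(2^k)/\ln^{p/2}(2^k)$ (resp.\ $Y:=\liminf_k D_p(2^k)$ for $d\ge3$). By Fatou's lemma $\mathbb{E}[Y]\le\liminf_k\mathbb{E}[D_p(2^k)]/\ln^{p/2}(2^k)<\infty$, so $Y<\infty$ almost surely; moreover a short robustness lemma --- translating the ball $B_{2^k}$ by a fixed lattice vector changes $D_p$ only through an $O(2^{k(d-1)})$ boundary shell, which is negligible after dividing by $2^{kd}\ln^{p/2}(2^k)$ --- shows $Y$ is invariant under the $\mathbb{Z}^d$-action, hence almost surely equal to a deterministic constant $C$ by ergodicity. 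Therefore, almost surely, there is a (random) sequence of dyadic, in particular approximately dyadic, radii $R\to\infty$, all exceeding some almost surely finite $r_*$, along which $D_p(R)\le(C+1)\ln^{p/2}R$ (resp.\ $D_p(R)\le C+1$).

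The step I expect to be the main obstacle is the expected local matching cost for $p>2$: making the transport inequality $W^p_p(\rho_1,n_\mu\mathsf{Leb})\lesssim\int_{B_R}|\nabla\phi|^p$ quantitative and, together with the Green's function moment computation, uniform up to $\partial B_R$, where neither the $p=2$ Hilbert-space identities nor the soft $p\le2$ reduction are available. The Poisson concentration and linear-statistic moment bounds and the Fatou-plus-ergodicity upgrade are comparatively routine, the one remaining delicate point being the robustness lemma, which is needed only to make the constant $C$ independent of the realization.
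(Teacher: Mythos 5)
Your proposal shares the paper's use of the Moser coupling (\cite[Appendix p.~16]{VilOandN}) to turn the Wasserstein cost into a $p$-Dirichlet integral of a Poisson-statistic potential, and the treatment of the density-fluctuation terms via Poisson concentration and Borel--Cantelli is essentially identical to the paper's Step~1. But you take a genuinely different route on the two hard points: you work directly on $B_R$ with Neumann boundary conditions (rather than the torus followed by a restriction lemma), and you only control $\mathbb{E}[D_p(R)]$, attempting to upgrade to an almost-sure bound by Fatou and ergodicity (rather than establishing fourth-moment concentration of $\fint|\nabla\phi|^p$ around its mean, as the paper does via a spectral-gap argument in the canonical ensemble, which gives a summable Chebyshev tail and hence Borel--Cantelli for \emph{all} dyadic $R$). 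Your route would avoid both the restriction lemma (Lemma~\ref{lem:localization}) and the canonical-ensemble concentration machinery, which is an attractive simplification in principle; but as written it has gaps.

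The most serious gap is the ``short robustness lemma.'' Translating $B_R$ by $\bar x\in\mathbb{Z}^d$ does \emph{not} change $W^p_{p,B_R}(\mu,n_\mu)$ by an $O(R^{d-1})$ boundary-shell amount: the $O(R^{d-1})$ Poisson points in the symmetric difference $B_R\triangle B_R(\bar x)$ may, in the worst case, be transported across distance $O(R)$ under the new optimal plan, so the crude a~priori bound on the change in $W_p^p$ is $O(R^{d-1}\cdot R^p)$, i.e.\ $O(R^{p-1})$ after dividing by $R^d$ --- far larger than $o(\ln^{p/2}R)$ in $d=2$. Making the comparison quantitative is essentially a restriction/localization estimate for $W_p$ and is of the same nature and difficulty as Lemma~\ref{lem:localization}, which the paper itself imports from \cite{GHO,koch23}. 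Without this lemma, Fatou alone gives $Y:=\liminf_k D_p(2^k)/\ln^{p/2}(2^k)<\infty$ a.s., i.e.\ a \emph{random} constant $C(\omega)$, not the deterministic $C$ of Theorem~\ref{lem:uppdata}. (Note also that Markov plus $\mathbb{E}[D_p(2^k)]\lesssim\ln^{p/2}(2^k)$ gives only tails of size $1/M$, which are not summable in $k$, which is precisely the obstruction the paper alludes to when it says that standard concentration fails for $p>d$.)

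Two further points. First, the ``midpoint measure'' trick does not by itself fix the lower bound on the interpolating density. In the Benamou--Brenier integral $\int_0^1\int|j_t|^p/\rho_t^{p-1}$, with $\rho_t=(1-t)\rho_1+t\gamma$ and $\gamma=\tfrac12\rho_1+\tfrac12 n_\mu$, the interpolant near $t=0$ is still $\approx\rho_1$, which can vanish on regions of radius $\sim\sqrt{\ln R}$ with no Poisson points, and the cost integral then diverges for $p\ge2$. What actually makes the estimate $W_p^p\lesssim_p\int|\nabla\phi|^p$ work is the nonlinear time reparametrization, flowing from the \emph{bounded-below} end with $j_t\sim(1-t)^{p-1}$ and $\rho_t\gtrsim(1-t)^p$, so that the $t$-integral stays finite; this is what \eqref{eq:benbrep}--\eqref{eq:uppboufin} do. Second, the uniform-in-$x$ estimates for the Neumann Green's function of $B_R$ (both the $\ln R$ variance bound and the $L^{4(p-1)}$ control needed for the Rosenthal term near $\partial B_R$) are the price of avoiding the torus; you correctly flag them as the main technical obstacle, but they are not addressed, and they replace rather than remove the work hidden in the paper's use of translation-invariant kernels on the torus. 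The Poisson linear-statistic computation and the Hölder reduction for $1<p\le2$ are correct, and the density terms match the paper; but in its present form the proof does not yield a deterministic constant, and the two PDE-side shortcuts (midpoint measure, Neumann boundary estimates) need to be repaired or replaced.
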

We remark here that by the annealed (i. e. in expectation, see for instance \cite{AKT84}, \cite{BaBo13}) results for the matching problem in dimension $d=2$ and by the concentration properties of the Poisson process we may expect that $D_p(R) \le O(\ln^\frac p2 R)$. However, the standard arguments based on concentration of measures to improve the annealed result already available in the literature to an almost sure one fail whenever $p > d$, see also  \cite[Remark 6.5]{GolTre} for a discussion of the problem in the setting of strong convergence of asymptotic costs. In order to prove Theorem \ref{lem:uppdata} we make use of the dynamical formulation of optimal transport, which allows us to combine PDE arguments together with the already existing concentration arguments for the Poisson point process, see Section \ref{sec:uppbounddata}.

\subsection{Main steps in the proof of Theorem \ref{thm:main}}
%\cm{FM: choose square or balls}
We describe here the main steps in the proof of Theorem \ref{thm:main}. 
For the detailed proofs we refer the reader to Section \ref{sec:proofscoup}.

\medskip
Following the arguments of the proof of \cite[Theorem 1.1]{HMO21}, we argue by contradiction. We show that for a locally $p$-optimal stationary matching $T$ between $\{X\}$ and $\{Y\}$ we have the upper bound 
\begin{equation}\label{eq:mainstepupp}
\frac 1{R^d} \sum_{X \in B_R \;\text{or}\;T(X) \in B_R} |T(X) - X| \le o(\ln^\frac12 R),
\end{equation}
and the lower bound (see \cite[Lemma 2.4]{HMO21})
\[
\frac 1{R^d} \sum_{X \in B_R \;\text{or}\;T(X) \in B_R} |T(X) - X| \ge \Omega(\ln^\frac12 R),
\]
implying the desired contradiction. 

\medskip
We now describe the main steps and the main differences between the proof of the upper bound \eqref{eq:mainstepupp} in the general case $p>1$ and in the quadratic case. The first common step is the observation that by stationarity and ergodicity the number of Poisson points which are transported by a far distance is small in volume fraction, i.~e. the following $L^0$-estimate on the displacement holds
\begin{equation}\label{eq:mainstepL0}
\# \{ X \in (-R,R)^d \;:\; |T(X)-X| \gg 1\} \le o(R^d),
\end{equation}
see \cite[Lemma 2.1]{HMO21} for a precise statement. \emph{As in the quadratic case this will be the only place where stationarity and ergodicity enter}. The next step consists on improving the ergodic estimate \eqref{eq:mainstepL0} to a uniform bound. As opposed to \cite[Lemma 2.2]{HMO21} we cannot rely on the monotonicity of the map $T$. However, the local $p$-optimality of the matching $T$ allows us to exploit the geometry of its support to improve \eqref{eq:mainstepL0} to
\begin{equation}\label{eq:mainstepLinfty}
 |T(X)-X| \le o(R) \text{\;provided that $X \in (-R,R)^d$,}
 \end{equation} 
see Lemma \ref{lem:Linfty}. By concentration properties of the Poisson process we may assume that $\frac{\#\{X \in B_R\}}{|B_R|} \in [\frac12,2]$ for $R \gg 1$. Summing \eqref{eq:mainstepLinfty} over $B_R$ we obtain
\begin{equation}\label{eq:mainstepenergy}
\frac1{R^d} \sum_{X \in B_R}|T(X) - X|^p \le o(R^p).
\end{equation}
The bound \eqref{eq:mainstepenergy} will let us run the harmonic approximation argument already employed in \cite[Lemma 2.3]{HMO21}. Nevertheless, in the current setting we need a \emph{$p$-cost version} of the latter. By the $p$-harmonic approximation theorem \cite[Theorem 1.1]{koch23} we know that if the energy term \eqref{eq:energytermp} and the data term \eqref{eq:datatermp} are small then  the displacement $T(X) - X$ is close to a $p'$-harmonic gradient field. %Nevertheless, in the current setting we need a \emph{$p$-cost version} of the latter. \cm{QUI QUI QUI}%In \cite[Theorem 1.1]{koch23} it has been shown that the displacement $T(X) - X$ is close (in the $p$-norm distance) to a $p'$-harmonic\footnote{we denote by $p'$ the conjugate exponent of $p$, i. e. $\frac1p + \frac1{p'} = 1$} gradient field $|\nabla \Phi|^{p'-2}\nabla \Phi$ provided that we are in a perturbative regime, which is quantified in terms of smallnes of the local energy
%\begin{equation}\label{eq:energytermp}
%E_p (R) := \frac 1 {R^d} \sum_{X \in B_R \;\text{or}\;T(X) \in B_R} |T(X) - X|^p
%\end{equation}
%and of the data term, that is the distance of $\mu \mres (-R,R)^d$ and $\nu \mres (-R,R)^d$ to the Lebesgue measure on the square $(-R,R)^d$
%\begin{equation}\label{eq:datatermp}
%D_p(R) := \frac1{R^d} W^p_{p,(-R,R)^d}(\mu, n_\mu) + \frac{R^p}{n_\mu}(n_\mu - 1)^p + \frac1{R^d} W^p_{p,(-R,R)^d}(\mu, n_\mu) + \frac{R^p}{n_\nu}(n_\nu - 1)^p,
%\end{equation}
%where $n_\mu = \frac{\# \{X \in (-R,R)^d\}}{(2R)^d}$ and $n_\nu = \frac{\# \{Y \in (-R,R)^d\}}{(2R)^d}$, and $W_{p,B} (\mu, \nu) = W_p (\mu \mres B, \nu \mres B)$. 
By \eqref{eq:mainstepenergy} and by exchanging the roles of $\{X\}$ and $\{Y\}$ in \eqref{eq:mainstepenergy} we have $E_p(R) \le o(R^p)$. 
On the other hand Theorem \ref{lem:uppdata} ensures that $D_p(R) \le O(\ln^\frac p2 R)$.
%By the annealed (i. e. in expectation) results for the matching problem in dimension $d=2$ and by the concentration properties of the Poisson process we may expect that $D_p(R) \le O(\ln^\frac p2 R)$. However, the standard arguments based on concentration of measure to improve the annealed result already available in the literature to an almost sure one fail whenever $p > d$, see also  \cite[Remark 6.5]{GolTre} for a discussion of the problem in the setting of strong convergence of asymptotic costs. However, we show the desired asymptotic bound for $D_p$ via a PDE approach, see Lemma \ref{lem:uppdata}. 
Hence, we are in a position to iteratively exploit the $p$-harmonic approximation result on an increasing sequence of scales to obtain that the local energy inherits the asymptotic of the data term $D_p$:
\begin{equation}\label{eq:mainstepharmapp}
\frac{1}{R^d} \sum_{X \in B_R \;\text{or}\;T(X) \in B_R} |T(X) - X|^p \le O(\ln^\frac p2 R),
\end{equation}
see Lemma \ref{lem:harmonicapprox}. Combining this with the $L^0$-estimate as in \cite[Lemma 2.4]{HMO21} yields
\[
\frac1{R^d} \sum_{X \in B_R \;\text{or}\;T(X) \in B_R} |T(X) - X| \le o(\ln^\frac12 R),
\]
see Lemma \ref{lem:upperboundL1}.

%%%%%%%%%
%%%%%%%%%
%%%%%%%%%
%%%%%%%%%
\section{Proofs}\label{sec:proofscoup}
\subsection{Upper Bound}\label{sec:uppbounddata}
In this section we establish the upper bound asymptotics for the data term \eqref{eq:datatermp}. 
%\begin{lemma}\label{lem:uppdata}
%Let $\mu, \nu$ denote two independent Poisson point processes in $\mathbb{R}^2$ of unit intensity. There exists a constant $C$, and a random radius $r_*<\infty$ a.~s. such that for any dyadic radii $R \ge r_*$
%\[
%D_p (R) \le C \ln^\frac p2 R.
%\]
%\end{lemma}
The proof of Theorem \ref{lem:uppdata} will follow from the upper bound asymptotics of the distance between the Poisson point process on a torus $[0,R)^d$ and the Lebesgue measure. Given two measures $\mu, \nu$ on $\R^d$, we consider their projection on the torus $[0,R)^d$ and we denote by $\tilde{W}_{[0,R)^d; p}(\mu, \nu)$ the $p$-Wasserstein distance on the torus $[0,R)^d$ between them. Given a Borel set $\Gamma \subset [0,R)^d$, we denote by $\tilde{W}_{[0,R)^d;p,\Gamma} (\mu, \nu) = \tilde{W}_{[0,R)^d;p} (\mu \mres \Gamma, \nu \mres \Gamma)$ its restriction to $\Gamma$.
\begin{lemma}\label{lem:uppbou}
Let $\mu$ be a Poisson point process on the torus $[0,R)^d$ of unit intensity. There exists a constant $C$, a random radius $r_*<\infty$ a.~s. such that for any dyadic radii $R \ge r_*$ and any $p \ge 1$
\begin{equation}\label{eq:uppbound}
\tilde{W}_{[0,R)^d; p}^p (\mu, n) \le CR^d \begin{cases}  \ln^\frac p2 R & \text{if $d=2$},\\
1 & \text{if $d\ge3$},
\end{cases}
\end{equation}
where $n = \frac{\mu([0,R)^d)}{R^d}$ is the (random) number density.
\end{lemma}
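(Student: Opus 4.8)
The plan is to prove \eqref{eq:uppbound} first in expectation and then upgrade it along dyadic radii by a Borel--Cantelli argument; since $\tilde W_{[0,R)^d;p}\le\tilde W_{[0,R)^d;2}$ up to a power of the total mass, the range $1\le p<2$ reduces to $p=2$, so I would concentrate on $p\ge2$. Write $Q=[0,R)^d$ and $N=\mu(Q)$; a Chernoff bound gives $N\in[\tfrac12R^d,2R^d]$ off an event of probability $\le e^{-cR^d}$, so $n=N/R^d$ may be treated as an order-one constant. For the expectation bound I would split the transport at a scale $\ell$: partition $Q$ into $\simeq(R/\ell)^d$ cubes of side $\ell$ and let $\hat\mu$ replace $\mu$ on each cube $C$ by the uniform measure of the same mass $\mu(C)$. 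Transporting $\mu$ to $\hat\mu$ cube by cube costs at most diameter${}^p$ times mass, so $\tilde W^p_{[0,R)^d;p}(\mu,\hat\mu)\le\sum_C\mu(C)(\sqrt d\,\ell)^p\lesssim R^d\ell^p$. For $d=2$ I would take $\ell\simeq(K\log R)^{1/2}$ with $K$ large, so that (by void and Poisson large-deviation estimates for the cube occupancies, together with Borel--Cantelli over dyadic $R$) a.s.\ eventually every cube satisfies $\mu(C)\in[\tfrac12\ell^2,2\ell^2]$; then $\hat\mu$ has density comparable to $1$ and the short-scale cost is $\lesssim R^2(\log R)^{p/2}$. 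For $d\ge3$ I would instead take $\ell$ a large constant and merge the (now subcritically percolating, hence a.s.\ small) empty cubes into occupied neighbours, which keeps $\hat\mu$ of density $\gtrsim1$ and the short-scale cost $\lesssim R^d$.

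For the remaining transport $\hat\mu\to n\,dx$ I would use the dynamical (Benamou--Brenier) formulation: along the linear interpolation $\rho_s=(1-s)\hat\mu+s\,n\,dx$ take $v_s=\rho_s^{-1}\nabla\phi$ with $\phi$ independent of $s$ solving $\Delta\phi=\hat\mu-n\,dx$ on $Q$ (solvable since the right-hand side has zero average); the continuity equation holds and $\rho_s\ge c>0$, so $\tilde W^p_{[0,R)^d;p}(\hat\mu,n\,dx)\le\int_0^1\!\!\int_Q|\nabla\phi|^p\rho_s^{-(p-1)}\lesssim\int_Q|\nabla\phi|^p$. Writing $\hat\mu$ as the projection of $\mu$ onto functions constant on $\ell$-cubes and using the zero-average Green's function of $Q$, one gets $\nabla\phi(x)=\int_Q g_x(y)\,d(\mu-dx)(y)$ with $|g_x(y)|\lesssim(|x-y|\vee\ell)^{1-d}$ (torus distance). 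By stationarity $\mathbb E\int_Q|\nabla\phi|^p=R^d\,\mathbb E|\nabla\phi(0)|^p$, and a Rosenthal-type inequality for compensated Poisson integrals gives, for $p\ge2$, $\mathbb E|\nabla\phi(0)|^p\lesssim_p(\int_Q|g_0|^2)^{p/2}+\int_Q|g_0|^p\lesssim(\log R)^{p/2}+\ell^{\,d-(d-1)p}$, which is $\lesssim(\log R)^{p/2}$ for $d=2$ and $\lesssim1$ for $d\ge3$. Adding the two steps yields $\mathbb E[\tilde W^p_{[0,R)^d;p}(\mu,n\,dx)]\lesssim R^d(\log R)^{p/2}$ (resp.\ $\lesssim R^d$ for $d\ge3$).

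For the a.s.\ statement it suffices, by Borel--Cantelli, that $\sum_{R=2^k}\mathbb P(\tilde W^p_{[0,R)^d;p}(\mu,n\,dx)>CR^d(\log R)^{p/2})<\infty$ for a large constant $C$. On the super-summably likely event that $N\le2R^d$ and that all cubes are ``good'' as above, the short-scale contribution is of the right order for free, so it remains only to concentrate the ``smooth'' quantity $\int_Q|\nabla\phi|^p$. For $d\ge3$ the field $\nabla\phi$ has bounded variance and summable correlations ($\sim|x-y|^{2-d}$), so $\int_Q|\nabla\phi|^p$ self-averages and $\mathbb P(\cdot>2\mathbb E)\lesssim R^{-c}$. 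For $d=2$, $p=2$, one has $\int_Q|\nabla\phi|^2=\|\hat\mu-n\|_{\dot H^{-1}}^2$, an explicit weighted sum of nearly independent (exponential) Fourier contributions, no single one carrying more than an $O(1/\log R)$ share of the total, so a Bernstein bound gives $\mathbb P(\cdot>CR^2\log R)\le R^{-cC}$, which is summable.

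The genuinely hard case — and, I expect, the technical heart of the lemma — is $d=2$ with $p>2$, i.e.\ exactly the regime $p>d$ in which naive concentration of $W_p^p$ fails. There $\nabla\phi$ is a logarithmically correlated field of variance $\simeq\log R$, so $\int_Q|\nabla\phi|^p$ does not concentrate in the relative sense, and a bare moment/union bound over dyadic scales would lose a factor $(\log\log R)^{O(1)}$. I would remove this by truncating the rare large values of $\nabla\phi$ at level $M\simeq(\log R)^{1/2}$: on $\{|\nabla\phi|\le M\}$ the contribution is $\lesssim M^{p-2}\int_Q|\nabla\phi|^2$, which concentrates exactly as in the $p=2$ case, while the contribution of $\{|\nabla\phi|>M\}$ has exponentially small expectation and is controlled by a Poisson concentration (modified log-Sobolev) inequality applied to $\int_Q|\nabla\phi|^p$, whose discrete gradient satisfies $D_X\int_Q|\nabla\phi|^p\lesssim\int_Q|\nabla\phi|^{p-1}(|x-X|\vee\ell)^{-1}\,dx+O(1)$ and is controllable on the good event. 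Making this final estimate quantitatively sharp is the main obstacle; everything else is routine.
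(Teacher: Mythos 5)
Your overall architecture is the right one and coincides with the paper's: solve a Poisson problem with source essentially $\mu - n$, use the Benamou--Brenier formulation along an interpolation that stays bounded below to reduce $\tilde W_p^p(\mu, n)$ to $\int_{[0,R)^d}|\nabla\phi|^p$, and then concentrate that functional. The cosmetic differences (cube-averaging $\hat\mu$ vs.\ Gaussian mollification $\mu_1$; linear interpolation $\rho_s=(1-s)\hat\mu+sn$ vs.\ the paper's $(1-t)^p$-weighted one; Rosenthal for $\mathbb{E}|\nabla\phi(0)|^p$ vs.\ Wu's concentration for $\nabla\phi(0)$) are immaterial. You also correctly identify where the genuine difficulty lies: almost-sure control of $\int_Q|\nabla\phi|^p$ in the regime $d=2$, $p>2$, which is precisely the regime $p>d$ where a bare moment-plus-union bound is lossy.

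However, the argument you propose for that crucial case does not close. Two concrete problems. First, the claim that the contribution of $\{|\nabla\phi|>M\}$ with $M\simeq(\log R)^{1/2}$ has ``exponentially small expectation'' is not what happens: since $\nabla\phi(0)$ has roughly Gaussian tails of variance $\simeq\log R$, choosing $M=K(\log R)^{1/2}$ yields $\mathbb{E}\big[|\nabla\phi(0)|^p\mathbf{1}_{|\nabla\phi(0)|>M}\big]\lesssim e^{-cK^2}(\log R)^{p/2}$ --- i.e.\ small in $K$ but of the \emph{same order} $(\log R)^{p/2}$ in $R$ as the main term. So the truncation does not reduce the tail to a lower-order error; you still need genuine concentration of that tail contribution, which is exactly as hard as the original problem (the tail carries most of the variance of $\int_Q|\nabla\phi|^p$). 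Second, the tool you reach for --- a Poisson modified log-Sobolev/Wu-type inequality applied directly to $F=\fint|\nabla\phi|^p$ via bounds on the discrete gradient $D_XF$ --- is the very route the paper examines and deliberately abandons for $F$: these inequalities only see $\sup_X|D_XF|$ and $\int|D_XF|^2\,dX$ and do not exploit the spatial averaging that makes $F$ concentrate much better than a pointwise $|\nabla\phi(0)|^p$. That is why the paper switches for $F$ to the canonical ensemble (condition on $N$, use the tensorized Poincar\'e/spectral gap inequality at fourth-moment level, represent $\nabla_{X_n}F$ via the Helmholtz projection plus a Calder\'on--Zygmund bound, and control the $N$-dependence by the monotonicity $f(N-1)\le f(N)$ and its partial reverse). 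Nothing in your proposal replaces this, and you acknowledge as much; as it stands, the proof has a genuine gap precisely at what you correctly diagnose as the technical heart of the lemma.
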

We shall derive Theorem \ref{lem:uppdata} combining Lemma \ref{lem:uppbou} with a restriction result for the data term, which will allow us to exchange the periodic Wasserstein distance with the Euclidean one. 
\begin{lemma}\label{lem:localization}
For any positive measure $\mu$ on the torus $[-2R,2R)^d$ there exists a constant $C>0$ such that
\[
\int_{R-\frac12}^{R+\frac12} \bigg( \tilde{W}_{[-4 R,4 R)^d; p,B_{\bar R}}^p (\mu, n_{\bar R}) + \frac{(n_{\bar R}-1)^p}{n_{\bar R}} \bigg) \, d\bar R \le C \tilde D,
\]
provided 
\[
\tilde{D}:= \tilde{W}_{[-4R,4R)^d; p}^p(\mu, n) + \frac{(n-1)^p}n\ll 1,
\]
where $n_{\bar R} = \frac{\mu(B_{\bar R})}{|B_{\bar R}|}$ and $n = \frac{\mu([-4R,4R)^d)}{(8R)^d}$ are the (random) number densities.
\end{lemma}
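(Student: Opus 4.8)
The plan is to carry a near-optimal \emph{dynamical} transport on the large torus $Q:=[-4R,4R)^d$ down to the balls $B_{\bar R}$, and to absorb the error made by cutting off at $\pa B_{\bar R}$ through an averaging (co-area) argument in the radius $\bar R$. Identifying $n$ with $n\dv x$, I would first fix a solution $(\rho_t,j_t)_{t\in(0,1)}$ of the continuity equation $\pa_t\rho_t+\nab\cdot j_t=0$ on $Q$ with $\rho_0=\mu$, $\rho_1=n$ and action
\[
\int_0^1\!\!\int_Q\rho_t^{1-p}|j_t|^p\,\dv x\dv t\;\le\;\tilde W^p_{Q;p}(\mu,n)+o(\tilde D),
\]
which exists by the Benamou--Brenier representation of $W_p^p$. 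Since $\tilde D\ll1$ the transported mass moves only a short distance, so on a neighbourhood of $\ov{B_{R+1/2}}$ --- which lies well inside $Q$ --- the motion is effectively Euclidean and I may work with the Euclidean ball $B_{\bar R}$ throughout; the superposition principle lets one make sense of all the restrictions below (after a standard time-mollification, since $\mu$ is only a positive measure).

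For fixed $\bar R\in(R-\hal,R+\hal)$ I would restrict $(\rho_t,j_t)$ to the cylinder $(0,1)\times B_{\bar R}$. The restricted pair solves the continuity equation up to a boundary source $-(j_t\cdot\nu)\,\mathcal H^{d-1}\restr\pa B_{\bar R}$, and at $t=1$ its spatial marginal is $n\dv x\restr B_{\bar R}$ instead of $n_{\bar R}\dv x\restr B_{\bar R}$; the mismatch is exactly the uniform density shift $n\to n_{\bar R}$, whose total mass equals the net boundary flux $\mu(B_{\bar R})-n|B_{\bar R}|$. I would close the restricted curve into an admissible one from $\mu\restr B_{\bar R}$ to $n_{\bar R}\dv x\restr B_{\bar R}$ by adding a correction flux with two ingredients: a boundary-layer flux reabsorbing $j_t\cdot\nu$ into a thin shell just inside $\pa B_{\bar R}$, and one extra time-slice carrying a \emph{gradient} flux $\nab\psi_{\bar R}$ with $\psi_{\bar R}$ solving $\Delta\psi_{\bar R}=n-n_{\bar R}$ in $B_{\bar R}$, whose action is $\les\abs{n_{\bar R}-n}^p\bar R^{p+d}$ by interior elliptic regularity. (Correcting the radial defect by a gradient/Poisson flux, rather than moving it across $B_{\bar R}$ ``as a lump'', is exactly why one passes to the dynamical formulation.) This yields, for each $\bar R$,
\[
\tilde W^p_{Q;p,B_{\bar R}}(\mu,n_{\bar R})\;\les\;\int_0^1\!\!\int_{B_{\bar R}}\rho_t^{1-p}|j_t|^p\,\dv x\dv t\;+\;\big(\text{boundary-layer action}\big)\;+\;\abs{n_{\bar R}-n}^p\bar R^{p+d}.
\]

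I would then integrate this over $\bar R\in(R-\hal,R+\hal)$. The interior action integrates to at most $\int_0^1\!\int_Q\rho_t^{1-p}|j_t|^p\les\tilde D$. For the boundary-layer term I would use the co-area formula together with the fact that an infinitesimal element of transport meets the sphere $\pa B_{\bar R}$ only for $\bar R$ in a window of length $\le$ (its displacement)$\,\wedge\,1$, so its accumulated action is again $\les\tilde D$. For the density-fix term, $\abs{n_{\bar R}-n}=\abs{\mu(B_{\bar R})-n|B_{\bar R}|}/|B_{\bar R}|$ is the flux of the time-averaged momentum $\bar J:=\int_0^1 j_t\dv t$ through $\pa B_{\bar R}$ divided by $|B_{\bar R}|$; and by $n_{\bar R}-1=(n_{\bar R}-n)+(n-1)$ one also has $\int\tfrac{(n_{\bar R}-1)^p}{n_{\bar R}}\dv{\bar R}\les\int\abs{n_{\bar R}-n}^p\dv{\bar R}+(n-1)^p$. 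Thus everything is reduced to bounding $\int_{R-1/2}^{R+1/2}\abs{\mu(B_{\bar R})-n|B_{\bar R}|}^p\,\bar R^{\,p+d-dp}\,\dv{\bar R}$ by the global action $\tilde W^p_{Q;p}(\mu,n)$ (and $(n-1)^p$).

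I expect this last estimate to be the main obstacle and the only place where the fine structure of optimal transport is genuinely used. Bounding $\abs{\mu(B_{\bar R})-n|B_{\bar R}|}\le\int_{\pa B_{\bar R}}|\bar J|$ and applying H\"older on the sphere is too lossy; instead one should keep the cancellation in $\int_{\pa B_{\bar R}}\bar J\cdot\nu$ --- equivalently test the continuity equation against a Lipschitz approximation of $\1_{B_{\bar R}}$ and optimise the transition width, balancing the $\tfrac1\delta\int_{\text{shell}}|\bar J|$ term against the mass $\mu$ puts in that shell --- and then combine with the co-area formula in $\bar R$ and the Benamou--Brenier bound. Making this trade-off close at the claimed order is the technical heart of the lemma; the remaining pieces (the boundary-layer construction, the elliptic estimate for $\psi_{\bar R}$, and checking that $\tilde D\ll1$ precludes transport over distances $\gtrsim R$ so that the periodic competitor really lives on $B_{\bar R}$) are more standard.
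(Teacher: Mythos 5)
The paper itself does not prove this lemma; it says only that it ``can be derived combining the proof of \cite[Lemma 2.10]{GHO} and \cite[Lemma 6.1]{koch23}''. Your blind sketch correctly reconstructs the \emph{shape} of that derivation: pass to the Benamou--Brenier/Eulerian description on the big torus, restrict the flow to $B_{\bar R}$, repair the boundary source and the density mismatch, and then average over $\bar R$ to turn pointwise boundary-flux bounds into integrated ones. That is indeed the strategy used in \cite[Lemma 2.10]{GHO} (for $p=2$) and \cite[Lemma 6.1]{koch23} (for general $p$).

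However, there is a genuine gap, and it is exactly the one you flag as ``the technical heart.'' Your competitor introduces a density-fix flux whose action per $\bar R$ scales like $|n_{\bar R}-n|^p\bar R^{\,p+d}$, so that after averaging you must show
\[
\int_{R-\frac12}^{R+\frac12}\big|\mu(B_{\bar R})-n|B_{\bar R}|\big|^p\,\bar R^{\,p+d-dp}\,\dv{\bar R}\;\lesssim\;\tilde D .
\]
Bounding the charge $(\mu-n)(B_{\bar R})$ by $\int_{\partial B_{\bar R}}|\bar J|$, applying H\"older on the sphere, and then using the crude bound $\int_{\text{shell}}|\bar J|^p\le\int_Q|\bar J|^p$ produces a right-hand side of order $R\,\tilde D$ --- off by exactly one factor of $R$, which is fatal. (Note that the \emph{second} term of the conclusion, $\int(n_{\bar R}-1)^p/n_{\bar R}$, has no $\bar R^{p+d}$ weight and \emph{is} closed by this crude route; it is the $\tilde W^p_{B_{\bar R}}$ term where the loss occurs.) You recognise this and gesture at ``keeping the cancellation in $\int_{\partial B_{\bar R}}\bar J\cdot\nu$'' and optimising a transition width, but you do not carry it out, and it is not clear your Poisson-flux density correction can be combined with such a refined flux bound without a more careful construction. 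In the cited literature this step is handled by a dedicated trace/flux estimate for the Eulerian pair (chosen together with a good radius by pigeonhole), not by a separate, globally-supported gradient correction; working out that estimate, and verifying that the boundary-layer absorption and the density repair do not double-count the same flux, is precisely what you would need to add to turn this outline into a proof. Two smaller points you correctly anticipate but also leave unresolved: the interpolating density $\rho_t$ is singular near $t=0$, so the action $\int\rho^{1-p}|j|^p$ needs regularisation before the restriction/coarea manipulations are licit; and one must justify that $\tilde D\ll1$ rules out periodic ``wrap-around'' so that the torus distance $\tilde W_{[-4R,4R)^d;p,B_{\bar R}}$ can be treated as Euclidean on $B_{\bar R}$.
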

The latter can be derived combining the proof of \cite[Lemma 2.10]{GHO} and \cite[Lemma 6.1]{koch23}.

\begin{proof}[Proof of Theorem \ref{lem:uppdata}]
%W.l.o.g. by exchanging the role of $\mu$ and $\nu$ it suffices to show that for the Poisson point process $\mu$ there exists a random radius $r_*$ such that for a (random) sequence of approximately dyadic radii $R \ge r_*$\footnote{We use the notation $A \lesssim B$ if there exists a global constant $C>0$, which may only depend on $d$, such that $A\le CB$. We write $A\sim B$ if both $A \lesssim B$ and $B \lesssim A$ hold.}
%\begin{equation}\label{eq:step0uppbou}
%\frac1{R^d} W_{p, B_R}^p (\mu, n_\mu) + \frac{R^p}{n_\mu} (n_\mu - 1)^p \lesssim \ln^\frac p2 R.
%\end{equation}
%The statement will follow by choosing the maximum of this random radius and the one pertaining to $\nu$.

%{\color{red} Why can we choose one sequence which satisfies (2.2) for both $\mu$ and $\nu$?}

\medskip
{\sc Step 1}.  Let $R \ge 1$ be an increasing sequence of approximately dyadic radii. We claim that there exists a constant $C$ and a random radius $r_* < \infty$ a.~s. such that for the fixed sequence of dyadic radii $R \ge r_*$
\begin{equation}\label{eq:step2uppbou}
\frac{R^p}{n_\mu}|n_\mu-1|^p \le C \begin{cases}
\ln^\frac p2 R & \text{if $d=2$},\\
1 & \text{if $d \ge 3$},
\end{cases}
\end{equation}
and
\begin{equation}\label{eq:step2uppbounu}
\frac{R^p}{n_\nu}|n_\nu-1|^p \le C \begin{cases}
\ln^\frac p2 R & \text{if $d=2$},\\
1 & \text{if $d \ge 3$}.
\end{cases}
\end{equation}
W.l.o.g. we focus on \eqref{eq:step2uppbou}. Indeed, \eqref{eq:step2uppbounu} will follow from \eqref{eq:step1uppbou} exchanging the role of $\mu$ and $\nu$ and taking the maximum of this radius and the one pertaining $\nu$. Since for large $R \gg 1$ we have $\frac{\ln^\frac p2 R}{R^p} \ll 1$ \eqref{eq:step2uppbou} is equivalent to \footnote{We use the notation $A \lesssim B$ if there exists a global constant $C>0$, which may only depend on $d$, such that $A\le CB$. We write $A\sim B$ if both $A \lesssim B$ and $B \lesssim A$ hold.}
\[
R^p |n_\mu-1|^p \lesssim \begin{cases}
\ln^\frac p2 R & \text{if $d=2$},\\
1 & \text{if $d \ge 3$}.
\end{cases}
\]
Since $n_\mu|B_R|$ is Poisson distributed with parameter $|B_R|$ by Cram\'er-Chernoff's bounds \cite[Theorem 1]{BouLuMa} we get for $d=2$
\[
\mathbb{P}(R^p|n_\mu-1|^p > \ln^\frac p2 R) = \mathbb{P}(|n_\mu|B_R| - |B_R||> C R \ln^\frac12 R) \lesssim \exp(-C \ln R)
\]
and
\[
\mathbb{P}(R^p|n_\mu-1|^p > C) = \mathbb{P}(|n_\mu|B_R| - |B_R||> C R^{d-1} ) \lesssim \exp(-C R^{d-2}),
\]
for $d\ge 3$. Finally, by a Borel-Cantelli argument \eqref{eq:step2uppbou} holds for the fixed sequence of approximately dyadic radii $R \ge r_*$. %{\color{red} MH: We have to be careful with the wording. It will not hold for all these sequences at the same time. We can choose a (or countably many) such sequences and then it holds a.s. for these sequences.}

\medskip
{\sc Step 2}. We claim that there exist a constant $C$ and a random radius $r_* < \infty$ a.~s. such that for a (random) sequence of approximately dyadic radii $R \ge r_*$
\begin{equation}\label{eq:step1uppbou}
\frac1{R^d} W_{p, B_R}^p (\mu, n_\mu) + \frac1{R^d} W_{p, B_R}^p (\nu, n_\nu) \le C \begin{cases}  \ln^\frac p2 R & \text{if $d=2$},\\
1 & \text{if $d\ge3$}.
\end{cases}
\end{equation}
By Lemma \ref{lem:uppbou} we may assume that \eqref{eq:uppbound} holds with $[0,R)^d$ replaced by $[0,8R)^d$. %{\color{red} A couple of 2 should be 4 below, no?} 
Moreover, by stationarity of the Poisson point process we may assume that there exists a random radius $r_*<\infty$ a.~s. such that for any dyadic $R \ge r_*$ \eqref{eq:uppbound} holds in the form
\begin{equation}\label{eq:step1uppbou2}
\frac1{R^d} \tilde{W}_{[-4R,4R)^d;p}^p (\mu, \tilde n_\mu) \le C\begin{cases}  \ln^\frac p2 R & \text{if $d=2$},\\
1 & \text{if $d\ge3$},
\end{cases}
\end{equation}
where $\tilde n_\mu = \frac{\mu([-4R,4R)^d)}{(8R)^d}$ is the (random) number density. Arguing in the same manner for $\nu$ we can deduce (possibly enlarging $r_*$) that
\begin{equation}\label{eq:step1uppbou2nu}
\frac1{R^d} \tilde{W}_{[-4R,4R)^d;p}^p (\nu, \tilde n_\nu) \le C\begin{cases}  \ln^\frac p2 R & \text{if $d=2$},\\
1 & \text{if $d\ge3$},
\end{cases}
\end{equation}
where $\tilde n_\nu = \frac{\nu([-4R,4R)^d)}{(8R)^d}$ is the (random) number density. By the restriction property of Lemma \ref{lem:localization} we may deduce that there exists a radius $R'\sim R$ such that 
\begin{equation}\label{eq:upplocwas}
\begin{split}
\lefteqn{W_{p,B_{R'}}^p(\mu, n_\mu) + W_{p,B_{R'}}^p(\nu, n_\nu)  =  \tilde{W}_{[-4R,4R)^d; p,B_{R'}}^p(\mu,  n_\mu) + \tilde{W}_{[-4R,4R)^d; p,B_{R'}}^p(\nu, n_\nu)} \\
& \lesssim \tilde{W}_{[-4R,4R)^d;p}^p (\mu, \tilde n_\mu) + \frac{R^p}{\tilde n_\mu}(\tilde n_\mu -1)^p + \tilde{W}_{[-4R,4R)^d;p}^p (\nu, \tilde n_\nu) + \frac{R^p}{\tilde n_\nu}(\tilde n_\nu -1)^p.
\end{split}
\end{equation}
Moreover, by the same argument as in {\sc Step 1} we may assume that \eqref{eq:step2uppbou} holds with $n_\mu$ and $n_\nu$ replaced by $\tilde n_\mu$ and $\tilde n_\nu$. Combining the latter with \eqref{eq:upplocwas} and \eqref{eq:step1uppbou2} and relabeling $R'$ yields \eqref{eq:step1uppbou}.

\medskip
{\sc Step 3}. Conclusion. Combining \eqref{eq:step2uppbou}, \eqref{eq:step2uppbounu} and \eqref{eq:step1uppbou} yields \eqref{eq:thmuppdata}. 
\end{proof}

Let us now turn to Lemma \ref{lem:uppbou}. In view of the dynamical formulation of optimal transport we  investigate the Moser coupling \cite[Appendix p. 16]{VilOandN} between $\mu$ and $n_\mu$.
%Let $\{X\}$ denote the Poisson point process on the torus $[0,L)^2$ of unit
%intensity; we consider the corresponding measure
%
%\begin{align}\label{ao08}
%\mu=\sum_{X}\delta_X.
%\end{align}
%
Let $(\cdot)_1$ denote a mollification on scale $1$, 
say the convolution\footnote{in the torus $[0,R)^d$}
with the standard Gaussian. Let $\phi$ denote the solution of the Poisson
problem on $[0,R)^d$
\begin{align}\label{ao03}
-\Delta\phi=\mu_1-\fint_{[0,R)^d}\mu_1= \mu_1 - n.
\end{align}
We are interested in the spatially averaged $p$-moment of its gradient
\begin{align}\label{ao07}
F:=\fint_{[0,R)^d}\frac{1}{p}|\nabla\phi|^p\quad\mbox{for any}\quad p<\infty.
\end{align}
We shall establish that thanks to the spatial averaging, $F$ has good concentration
properties around its expectation $\mathbb{E}F$. As we establish, the latter
is $O(\ln^\frac{p}{2}R)$ for $d=2$ and $O(1)$ for $d \ge 3$. What matters to us is that the probability 
that $F\gg\ln^\frac{p}{2}R$ if $d=2$ and $F \gg 1$ if $d\ge3$ is very small. 

\begin{lemma}\label{lem:1} There exists a constant $C<\infty$ such that for $R\ge C$ and any $p \ge 1$,
\begin{align}\label{ao00}
\mathbb{P}(F\ge C\ln^\frac{p}{2}R)\le \frac{C}{\ln^2R} && \text{if $d=2$}.
\end{align}
and
\begin{align}\label{eq:concFd>3}
\mathbb{P}(F\ge C)\le \frac{C}{R^{2d -4}} && \text{if $d\ge 3$}.
\end{align}
\end{lemma}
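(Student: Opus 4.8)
The plan is to estimate $\mathbb{E}F$ and then exploit concentration of $F$ around its mean. Write $\phi = (-\Delta)^{-1}(\mu_1 - n)$ on the torus $[0,R)^d$. Since $\mu_1 - n$ has zero average, $\phi$ is well-defined, and by linearity $\nabla\phi = \int (\nabla G_R * \delta_{X})_1\,$-type sum minus a constant field, where $G_R$ is the torus Green's function. The key input is the covariance structure of the Poisson process: $\mathbb{E}[(\mu_1 - n)(x)(\mu_1 - n)(y)]$ is, up to the small correction from subtracting $n$, the function $g_1(x-y) := (\text{Gaussian})_{\sqrt 2}(x-y)$ coming from $\mathbb{E}[\mu_1(x)\mu_1(y)] = \mu_1\text{-mean}^2 + (\text{self-term})$. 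Hence $\mathbb{E}|\nabla\phi(x)|^2 = \int\int \nabla_x G_R(x-z)\cdot\nabla_x G_R(x-z') g_1(z-z')\,dz\,dz'$, which by Parseval on the torus equals $\sum_{k\in(2\pi/R)\mathbb{Z}^d\setminus\{0\}} \frac{|k|^2}{|k|^4}\widehat{g_1}(k) = \sum_{k\ne 0}\frac{\widehat{g_1}(k)}{|k|^2}$. Since $\widehat{g_1}(k) = e^{-|k|^2}$ is $O(1)$ for $|k|\lesssim 1$ and decays rapidly beyond, this sum is $\sim \ln R$ for $d=2$ (the classical logarithmic divergence of the 2D Green's function, cut off at the UV scale $1$ by the mollifier and at the IR scale $R$ by the torus) and $O(1)$ for $d\ge 3$. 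This gives $\mathbb{E}|\nabla\phi(x)|^2 \lesssim \ln R$ (resp. $O(1)$), uniformly in $x$ by stationarity.

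Next I would upgrade the second moment to the $p$-th moment. Because $\phi$ solves a linear elliptic equation with Gaussian-type (hence subexponential) random data, $\nabla\phi(x)$ has Gaussian-type tails: one can either invoke hypercontractivity / Wiener chaos bounds (the field $\nabla\phi(x)$ is a first-order object in the Poisson chaos, modulo the self-intersection correction), or argue by hand that $\mathbb{E}|\nabla\phi(x)|^p \lesssim_p (\mathbb{E}|\nabla\phi(x)|^2)^{p/2} \lesssim \ln^{p/2}R$ (resp. $O(1)$). Integrating over $x\in[0,R)^d$ and dividing by $R^d$ gives $\mathbb{E}F \lesssim \ln^{p/2}R$ for $d=2$ and $\mathbb{E}F\lesssim 1$ for $d\ge 3$. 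This is the step I expect to be the main obstacle: getting the $p$-th moment bound with the right power of $\ln R$ uniformly in $p\ge 1$ requires some care with the chaos/hypercontractivity constants, and one must be careful that the self-interaction term (from $\mu_1(x)^2$ having a $\delta$-like contribution smeared on scale $1$) only contributes a bounded correction, not a growing one.

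Finally, I would obtain the concentration statement \eqref{ao00}, \eqref{eq:concFd>3} by controlling the variance of $F$ (or a higher centered moment) and applying Chebyshev. Since $F = \fint \frac1p|\nabla\phi|^p$ is a nice functional of the Poisson process — changing the configuration in a unit ball perturbs $\nabla\phi$ only by an $L^2$-in-space amount that is summable against the Green's kernel — one expects $\mathrm{Var}(F) \lesssim \frac{1}{R^d}\times(\text{something polylog})$, which is far smaller than $(\mathbb{E}F)^2/\ln^2 R$ for $d=2$, and smaller than a constant over $R^{2d-4}$ for $d\ge 3$ (here the exponent $2d-4$ presumably tracks the gain $R^{-d}$ from spatial averaging against a further $R^{d-4}$-type factor in the variance computation, so the cleanest route is to compute $\mathrm{Var}(F)$ directly via the Poincaré/Mehler–type inequality for Poisson functionals, $\mathrm{Var}(F)\le \mathbb{E}\int |D_z F|^2\,dz$, where $D_z$ is the add-a-point difference operator). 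Chebyshev's inequality $\mathbb{P}(F \ge \mathbb{E}F + t) \le \mathrm{Var}(F)/t^2$ with $t \sim \mathbb{E}F$ then yields the claimed bounds $C/\ln^2 R$ and $C/R^{2d-4}$ after absorbing constants. The only subtlety is bookkeeping: one needs the add-a-point perturbation of $|\nabla\phi|^p$, which by convexity is controlled by $|\nabla\phi|^{p-1}|\nabla(D_z\phi)|$, and then a Hölder step reduces everything to the already-established moment bounds on $|\nabla\phi|$ and the explicit kernel estimates on $D_z\phi = \nabla G_R(\cdot - z)_1$.
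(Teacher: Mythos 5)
The overall strategy---estimate $\mathbb{E}|\nabla\phi(x)|^2\sim\ln R$ (resp.\ $O(1)$) by Fourier analysis, upgrade to $p$-th moments, and then concentrate $F$ around its mean via an add-a-point difference operator and a Poincar\'e-type inequality---is aligned with the paper's. The Parseval computation of $\mathbb{E}|\nabla\phi|^2$ is fine. Two issues, however, are substantive. First, your ``upgrade via hypercontractivity / Wiener chaos'' is not straightforward for Poisson chaos: unlike Gaussian chaos, Poisson first chaos is not hypercontractive, and its tails are a mixture of Gaussian and exponential. The paper handles this precisely by invoking Wu's concentration inequality for Poisson functionals (the $\beta$--$\alpha^2$ form), which yields exactly the required mixed Gaussian/exponential tail and hence $\mathbb{E}|\nabla\phi(X)|^p\lesssim\ln^{p/2}R$. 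Without something like this, the step ``$\mathbb{E}|\nabla\phi|^p\lesssim(\mathbb{E}|\nabla\phi|^2)^{p/2}$'' is an assertion, not an argument.

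The second issue is a genuine gap and would make the proof fail as written in $d=2$. Your expectation that $\mathrm{Var}(F)\lesssim R^{-d}\times\text{polylog}$ is too optimistic: the add-a-point perturbation $D_z\phi$ solves $-\Delta D_z\phi=(\delta_z)_1-R^{-d}$, a \emph{monopole} problem, so after linearizing and integrating by parts one finds $D_zF\approx R^{-d}(u_1(z)-\fint u_1)$ with $-\Delta u=\nabla\cdot|\nabla\phi|^{p-2}\nabla\phi$. The spatial $L^2$-norm of $u_1-\fint u_1$ is controlled only through the Poincar\'e inequality on $[0,R)^d$, whose constant is $R^2$, giving $\int_z|D_zF|^2\,dz\lesssim R^{2-d}\fint|\nabla\phi|^{2(p-1)}$. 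Thus $\mathrm{Var}(F)\lesssim\ln^{p-1}R$ for $d=2$, which does \emph{not} decay in $R$. Chebyshev with a threshold $\sim\ln^{p/2}R$ then gives only $\PP(F\gtrsim\ln^{p/2}R)\lesssim 1/\ln R$, which is \emph{not} summable over dyadic $R$ and hence insufficient for the Borel--Cantelli step in Lemma \ref{lem:uppbou}. This is exactly why the paper works with the \emph{fourth} centered moment, obtained by applying the (canonical-ensemble) Poincar\'e inequality to $F^2$ and Cauchy--Schwarz; the resulting $\mathbb{E}(F-\mathbb{E}F)^4\lesssim\ln^{2(p-1)}R$ combined with Chebyshev at the fourth power gives the needed $1/\ln^2 R$. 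You mention the fourth moment only as a possibility; it is in fact required. Note also that the paper's gradient $\nabla_{X_n}F$ (canonical ensemble, a \emph{dipole}, yielding $\nabla_{X_n}F=R^{-d}\nabla u_1(X_n)$, a more local object amenable to Calder\'on--Zygmund) is not the same as your $D_zF$; both give the same variance order once the torus Poincar\'e constant is accounted for, but your sketch omits this subtlety and the associated $R^2$ factor.
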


An inspection of the proof reveals that the exponent $2$  if $d=2$,  $2d-4$ if $d\ge 3$, on the r.~h.~s.~could be
replaced by any exponent $<\infty$ (on which $C$ will depend). 
However, it is sufficient for our purposes 
that the r.~h.~s.~of (\ref{ao00}) and \eqref{eq:concFd>3} is summable over dyadic $R$, which holds for
any exponent $>1$.
\medskip

The proof of Lemma \ref{lem:uppbou} is a direct consequence of Lemma \ref{lem:1}.

\begin{proof}[Proof of Lemma \ref{lem:uppbou}]
{\sc Step 1.} Definition of $r_*$. By Lemma \ref{lem:1} and a Borel-Cantelli argument we can deduce that there exists a constant $C$ and a random radius $r_* < \infty$ a.~s. such that for any dyadic radii $R \ge r_*$
\begin{equation}\label{eq:uppBenBre}
\int_{[0,R)^d} |\nabla \phi|^p \le C R^d \begin{cases}
\ln^\frac p2 R & \text{if $d=2$},\\
1 & \text{if $d \ge 3$},
\end{cases}
\end{equation}
where $\phi$ solves \eqref{ao03}. Moreover, arguing as for \eqref{eq:step2uppbou} we may assume that $r_*$ is large enough so that 
\begin{equation}\label{eq:numbdens}
\frac{\mu([0,R)^d)}{R^d} \in \bigg[ \frac12, 2 \bigg] \quad \text{for $R \ge r_*$}.
\end{equation}
%Indeed, by the Cram\'er-Chernoff's bounds we have that 
%\[
%\mathbb{P}\bigg( \frac{\mu([0,R)^2)}{R^2} \notin \bigg[\frac12, 2\bigg] \bigg) \lesssim \exp(-CR^2) 
%\]
%and thus \eqref{eq:numbdens} by a Borel-Cantelli argument. 

\medskip
{\sc Step 2.} Proof of \eqref{eq:uppbound}. By the triangle inequality and the semigroup contraction property of the Wasserstein distance we may write
\begin{equation}\label{eq:uppboutriang}
\tilde{W}_{[0,R)^d;p}^p (\mu, n)  \lesssim \tilde{W}_{[0,R)^d;p}^p (\mu, \mu_1) + \tilde{W}_{[0,R)^d;p}^p (\mu_1, n)  \lesssim 1 + \tilde{W}_{[0,R)^d;p}^p (\mu_1, n).
\end{equation}
We now turn to estimate the second item of the r.~h.~s. of \eqref{eq:uppboutriang}. By \eqref{eq:numbdens} we have the lower bound $n \ge \frac12$ which together with \eqref{eq:uppboutriang} implies \eqref{eq:uppbound} by the inequalities
\begin{equation}\label{eq:uppboufin}
\tilde{W}_{[0,R)^d,p}^p (\mu_1, n) \le 2^{p-1} p^p \int_{[0,R)^d} |\nabla \phi|^p \stackrel{\eqref{eq:uppBenBre}}{\le} C R^d \begin{cases}
\ln^\frac p2 R & \text{if $d=2$},\\
1 & \text{if $d\ge 3$}. 
\end{cases}
\end{equation}
The first inequality in \eqref{eq:uppboufin} can be derived from the dynamical description of the transport distance, namely the Benamou-Brenier formulation of the transport distance
\begin{equation}\label{eq:benbrep}
\tilde{W}_{[0,R)^2;p}(\mu_1, n) = \min \bigg\{ \int_0^1 \int_{[0,R)^d} \frac{|j|^p}{\rho^{p-1}} \ : \ \partial_t \rho + \nabla \cdot j = 0, \rho_0 = \mu_1, \rho_1 = n \bigg\},
\end{equation}
where $\rho, j$ have to be understood as distributions on $[0,1] \times [0,R)^d$ (see for instance \cite[Theorem 5.28]{Santam}). Indeed, the couple $(\rho, j)$, with $\rho = (1-t)^p \mu_1 + (1-(1-t)^p)n $ and $j = - p (1-t)^{p-1}\nabla \phi$ is an admissible candidate for \eqref{eq:benbrep} and by \eqref{eq:numbdens} we have the lower bound $\rho \ge (1-t)^p n \ge \frac 12 (1-t)^p$. 
\end{proof}

\begin{proof}[Proof of Lemma \ref{lem:1}]
By Jensen's inequality we can restrict ourselves to the case $p\ge 2$. By Chebyshev's inequality, it is enough to establish 
\begin{align}
%\mathbb{E} F&\lesssim \ln^\frac{p}{2}R,\label{ao01}\\
\mathbb{E}(F-C\ln^\frac{p}{2}R)_+^4&\lesssim \ln^{2(p-1)}R \quad \text{if $d=2$},
\label{ao02}
\end{align}
and 
\begin{equation}\label{eq:ao02}
\mathbb{E}(F-C)_+^4 \lesssim
R^{4-2d} \quad \text{if $d\ge 3$},
\end{equation}
for some constant $C$. 
We start by ignoring the spatial averaging in (\ref{ao07}) by considering 
\begin{align*}
G=G(\mu):=\nabla\phi(X)\quad\mbox{for some fixed point}\;X
\end{align*}
%
%By stationarity (i.~e.~shift-invariance) of the ensemble and the shift covariance
%of $\nabla\phi$, for (\ref{ao01}) it is enough to consider $G=\nabla\phi(X)$ for some fixed 
%point $X$ and to 
and establish its concentration. In fact, we shall derive a
mixture of exponential and Gaussian concentration:
\begin{align}\label{ao01bis}
-\ln\mathbb{P}(|G-\mathbb{E}G|\ge M)\gtrsim
\left\{\begin{array}{cl}
\frac{M^2}{\ln R}&\mbox{for $d=2$ and}\;M \ll \ln R\\
M&\mbox{otherwise}\end{array}\right\},
\end{align}
which by the layer cake representation implies the $L^p$ estimate in probability
\begin{align}\label{ao01ter}
\mathbb{E}^\frac{1}{p}|G-\mathbb{E}G|^p\lesssim\begin{cases}
\ln^\frac{1}{2} R & \text{if $d=2$}\\
1 & \text{if $d\ge3$}.
\end{cases}
\end{align}
By invariance of the ensemble and covariance of $\phi$ under reflection w.~r.~t.~
to the $d$ Cartesian hyper-planes crossing $X$, we have $\mathbb{E}G=0$, so that (\ref{ao01ter})
sharpens to
\begin{align}\label{ao19}
\mathbb{E}|G|^p\lesssim\begin{cases}
\ln^\frac{p}{2} R & \text{if $d=2$}\\
1 & \text{if $d\ge3$}.
\end{cases}
\end{align}
%
%This yields (\ref{ao01}) by averaging in $X\in[0,L)^d$.

\medskip
We now turn to the argument for (\ref{ao01bis}).
The concentration principle in \cite[Proposition 3.1]{Wu00} already
applied in \cite[Lemma 2.5]{HMO21} monitors
the change $D_{X_0}G:= G(\mu+ \delta_{X_0}) - G(\mu)$ of $G$ arising from adding a point at position $X_0$
to the point cluster. In view of (\ref{ao03}), the effect is given by
\begin{align}\label{ao14}
D_{X_0}G=\nabla D_{X_0}\phi(X)\quad\mbox{where}
\quad-\Delta D_{X_0}\phi=(\delta_{X_0})_1-R^{-d}.
\end{align}
We learn that $D_{X_0}\phi$ is the mollified potential function for a
periodic charge distribution at $X_0+(R\mathbb{Z})^d$ with a constant background charge
making the distribution overall neutral. This object is well-defined
on the level of its gradient and satisfies\footnote{we denote by ${\rm dist}(x,y+(R\mathbb{Z})^d) := \min_{k \in (R\mathbb{Z})^d} |x-y-k| $ the periodic distance on the torus $[0,R)^d$}
\begin{align}\label{ao15}
|D_{X_0}G|=|\nabla D_{X_0}\phi(X)|\lesssim
\big({\rm dist}(X,X_0+(R\mathbb{Z})^d)+1\big)^{1-d},
\end{align}
from which we learn, in the notation of \cite[Proposition 3.1]{Wu00},
\begin{align*}
\beta:=\sup_{X_0}|D_{X_0}G|\lesssim 1\quad\mbox{and}\quad
\alpha^2:=\int_{[0,R)^d}dX_0|D_{X_0}G|^2\lesssim \begin{cases}
\ln R & \text{if $d=2$},\\
1 & \text{if $d \ge 3$}.
\end{cases}
\end{align*}
This implies
\begin{align*}
\mathbb{P}(|G-\mathbb{E}G|\ge M)
\le 2\exp\big(-\frac{M}{2\beta}\ln(1+\frac{\beta M}{\alpha^2})\big),
\end{align*}
which is easily seen to imply (\ref{ao01bis}). 

\medskip

It is convenient to use a different concentration principle for (\ref{ao02}).
While for (\ref{ao01ter}), we used the concentration principle for the
``grand canonical ensemble'' of the Poisson point process on $[0,R)^2$, for (\ref{ao02})
is convenient to disintegrate this grand canonical ensemble into the ``canonical ensemble''
$\mathbb{E}_N$ of $N$ i.~i.~d.~points uniformly distributed.
Note that %(\ref{ao08}) and 
\eqref{ao03} assumes the form
\begin{align}\label{ao04}
-\Delta\phi=\mu_1-N\quad\mbox{on}\;[0,R)^d\quad
\mbox{where}\quad\mu=\sum_{n=1}^N\delta_{X_n},
\end{align}
where also the convolution refers to the torus $[0,R)^d$. The advantage is that we have an easy spectral gap estimate, which is
in fact just the tensorization of the standard Poincar\'e inequality with mean value zero
on $[0,R)^d$: For any suitable function $F=F(X_1,\cdots,X_N)$ we have
\begin{align*}
\mathbb{E}_N(F-\mathbb{E}_NF)^2\lesssim R^{2}\mathbb{E}_N\sum_{n=1}^N|\nabla_{X_n}F|^2.
\end{align*}
Applying this inequality with $F^2$ playing the role $F$, and appealing to
the Cauchy-Schwarz inequality in probability, we may upgrade this
standard version to the exponent $4$, which will be sufficient
for our purposes:
\begin{align*}
\mathbb{E}_N(F-\mathbb{E}_NF)^4\lesssim R^{4}\mathbb{E}_N\big(\sum_{n=1}^N|\nabla_{X_n}F|^2\big)^2.
\end{align*}
By the Cauchy-Schwarz inequality in $N$, and for our $F$ that is invariant under
permuting its argument, this yields
\begin{align}\label{ao11}
\mathbb{E}_N(F-\mathbb{E}_NF)^4\lesssim R^{4}N^2\mathbb{E}_N|\nabla_{X_N}F|^4.
\end{align}

\medskip

We now derive a suitable representation for $\nabla_{X_n}F$. 
From (\ref{ao07}) we obtain for the partial derivative $\nabla_{X_n}F$ for our $F$
%
%\begin{align*}
%\nabla_{X_n}F=\fint_{[0,L)^d}|\nabla\phi|^{p-2}%\nabla\phi\cdot\nabla\nabla_{X_n}\phi,
%\end{align*}
%
%where according to %(\ref{ao08}) and 
%(\ref{ao04}), $\nabla\nabla_{X_n}\phi$ is determined by
%
%\begin{align*}
%-\Delta\nabla_{X_n}\phi=(\nabla\delta_{X_n})_1\quad\mbox{on}\;[0,L)^d.
%\end{align*}
%
%Interpreting $\nabla\phi$ as periodic functions on $\mathbb{R}^d$,
%we may rewrite this as \cm{@@@Interpretation of the integral@@@}
%
\begin{align}
\nabla_{X_n}F=R^{-d}\int_{[0,R)^d}|\nabla\phi|^{p-2}
\nabla\phi\cdot(\nabla\nabla_{X_n}\bar\phi)_1,\label{ao09}\\
\mbox{where}\quad
-\Delta\nabla_{X_n}\bar\phi=\nabla\delta_{X_n}\quad\mbox{on}\;[0,R)^d.\nonumber
\end{align}
From the latter, we learn that $-\nabla\nabla_{X_n}\bar\phi(x)$ is the translation-invariant kernel, evaluated at $x-X_n$, of the Helmholtz projection, i.~e.~the $L^2([0,R)^d)$-orthogonal projection onto gradient fields (see \cite[Theorem 2.4.9]{hodge} and the discussion below). Since the latter operator is symmetric, and commutes with mollification, 
(\ref{ao09}) can be reformulated as
\begin{align}\label{ao10}
\nabla_{X_n}F=R^{-d}\nabla u_1(X_n),
\end{align}
where $-\nabla u$ is the Helmholtz projection of $|\nabla\phi|^{p-2}\nabla\phi$, that is
\begin{align}\label{ao13}
-\Delta u=\nabla\cdot|\nabla\phi|^{p-2}\nabla\phi.
\end{align}
Inserting (\ref{ao10}) into (\ref{ao11}), we obtain %(for our $d=2$)
\begin{align}\label{ao11bis}
\mathbb{E}_N(F-\mathbb{E}_NF)^4\lesssim \bigg(\frac{N}{R^d}\bigg)^2 R^{4-2d} \mathbb{E}_N|\nabla u_1(X_N)|^4.
\end{align}
%
%noting that under $\mathbb{E}$, that is, the Poisson distribution of $N$ with mean $L^d$,
%the first r.~h.~s.~factor is very close to one; it will be sufficient to control
%the forth moment:
%%
%\begin{align}\label{ao21}
%\mathbb{E}(\frac{N}{L^d})^4\lesssim 1.
%\end{align}

\medskip

In view of the Calder\'on-Zygmund estimate (see \cite[Chapter I, II and III]{stein} for a reference on classical Calder\'on-Zygmund's theory) for (\ref{ao13})
\begin{align}\label{ao12}
\fint_{[0,R)^d}|\nabla u|^4\le\fint_{[0,R)^d}|\nabla\phi|^{4(p-1)},
\end{align}
the plan now is to pass from $\mathbb{E}_N|\nabla u_1(X_N)|^4$ to
$\mathbb{E}_N\fint_{[0,R)^d}|\nabla u_1|^4$.
To this purpose, we will consider $\phi'$ defined in (\ref{ao04})
with $N$ replaced by $N-1$, and $u'$ defined like $u$ in (\ref{ao13})
with $\phi$ replaced by $\phi'$. Hence provided we can
\begin{align}\label{ao20}
\mbox{estimate}\quad\mathbb{E}_N|\nabla u_1(X_N)|^4\quad\mbox{by}\quad
\mathbb{E}_{N-1}|\nabla u_1'(X_N)|^4,
\end{align}
we may proceed to capitalize on the (stochastically) independence of
$u'$ of the uniformly distributed $X_N$ to the effect of
\begin{align*}
\mathbb{E}_N|\nabla u_1'(X_N)|^4=\mathbb{E}_{N-1}\fint_{[0,R)^d}|\nabla u_1'|^4.
\end{align*}
Using that $(\cdot)_1$ contracts the norm and (\ref{ao12}) we obtain by shift-covariance
of $\nabla\phi'$
\begin{align}\label{ao17}
\mathbb{E}_N|\nabla u_1'(X_N)|^4\lesssim\mathbb{E}_{N-1}|\nabla \phi'|^{4(p-1)}.
\end{align}
Hence provided we may 
\begin{align}\label{ao20bis}
\mbox{estimate}\quad\mathbb{E}_{N-1}|\nabla\phi'|^{4(p-1)}
\quad\mbox{by}\quad\mathbb{E}_N|\nabla\phi|^{4(p-1)}
\end{align}
we hope to obtain from (\ref{ao11bis}) that
\begin{align}\label{ao22}
\mathbb{E}_N(F-\mathbb{E}_NF)^4 \lesssim\bigg(\frac{N}{R^d}\bigg)^2 R^{4-2d}(\mathbb{E}_N|\nabla\phi|^{4(p-1)}+1).
\end{align}
Applying $\mathbb{E}$ to (\ref{ao22}), which just means applying the Poisson distribution
with mean $R^d$ to $N$, and using the Cauchy-Schwarz inequality on the latter,
and its good concentration property (on the level of the fourth moment), we obtain
\begin{align*}
\mathbb{E}(F-\mathbb{E}_NF)^4\lesssim R^{4-2d} \big(\mathbb{E}|\nabla\phi|^{8(p-1)}+1\big)^\frac{1}{2}.
\end{align*}
Inserting (\ref{ao19}) (with $p$ replaced by $8(p-1)$) we obtain 
\begin{align}\label{ao23}
\mathbb{E}(F-\mathbb{E}_NF)^4\lesssim
\begin{cases}
\ln^{2(p-1)}R & \text{if $d=2$},\\
R^{4-2d} & \text{if $d\ge3$},
\end{cases}
\end{align}
the major step towards (\ref{ao02}).

\medskip

We now turn to (\ref{ao20}) and (\ref{ao20bis}). 
%We now control $\nabla u-\nabla u'$ and $\nabla \phi-\nabla \phi'$. 
Momentarily introducing $j(z):=|z|^{p-2}z$ we note that
\begin{align*}
|j(z)-j(z')|\lesssim|z'|^{p-2}|z-z'|+|z-z'|^{p-1}.
\end{align*}
From (\ref{ao13}) we deduce a representation of $\nabla(u-u')$ as the Helmholtz
projection of $j(\nabla\phi)-j(\nabla\phi')$ on $[0,R)^d$. In view of the mollification
we obtain
\begin{align*}
|\nabla(u-u')_1(X_N)|\lesssim\int_{[0,R)^d}\big({\rm dist}(\cdot,X_N+(R\mathbb{Z})^d)+1\big)^{-d}
|j(\nabla\phi)-j(\nabla\phi')|.
\end{align*}
From (\ref{ao04}) we obtain, cf.~(\ref{ao15}),
\begin{align*}
|\nabla(\phi-\phi')|\lesssim\big({\rm dist}(\cdot,X_N+(R\mathbb{Z})^d)+1\big)^{1-d}\lesssim 1.
\end{align*}
The combination of these yields (using $p-1\ge 1$) 
\begin{align*}
\lefteqn{|\nabla (u-u')_1(X_N)|}\nonumber\\
&\lesssim\int_{[0,R)^d}\big({\rm dist}(\cdot,X_N+(R\mathbb{Z})^d)+1\big)^{1-2d}
(|\nabla\phi'|^{p-2}+1).
\end{align*}
Since by $1-2d<-d$ we have
\begin{align}\label{ao16}
\int_{[0,R)^d}\big({\rm dist}(\cdot,X_N+(R\mathbb{Z})^d)+1\big)^{1-2d}\lesssim 1,
\end{align}
this implies
\begin{align*}
\lefteqn{|\nabla (u-u')_1(X_N)|^4}\nonumber\\
&\lesssim\int_{[0,R)^d}\big({\rm dist}(\cdot,X_N+(R\mathbb{Z})^d)+1\big)^{1-2d}
(|\nabla\phi'|^{4(p-2)}+1).
\end{align*}
Applying $\mathbb{E}_N$, using the shift covariance on the level $\mathbb{E}_{N-1}$, 
and once more (\ref{ao16}) gives
\begin{align}\label{ao24}
\mathbb{E}_N|\nabla (u-u')_1(X_N)|^4\lesssim\mathbb{E}_{N-1}|\nabla\phi'|^{4(p-2)}+1.
\end{align}

\medskip
By the triangle inequality, (\ref{ao24}) deals with (\ref{ao20}); 
by \eqref{ao19} the additional error term is %already present on the r.~h.~s.~of (\ref{ao17}),
of higher order of the one already present on the r.~h.~s.~of (\ref{ao17}), and the $+1$ is of higher order.  The argument for (\ref{ao20bis}) is easier.
In fact, for later purpose, we shall establish the monotonicity
\begin{align}\label{ao25}
f(N):=\mathbb{E}_N\frac{1}{p}|\nabla\phi|^p=\mathbb{E}_NF\quad\mbox{satisfies}\quad f(N-1)\le f(N).
\end{align}
Appealing to the monotonicity with $p$ replaced by $4(p-1)$ we obtain (\ref{ao20bis}).
Here comes the argument for (\ref{ao25}). By convexity of $z\mapsto\frac{1}{p}|z|^p$ we have
$\frac{1}{p}|z|^p$ $\ge\frac{1}{p}|z'|^p$ $+|z'|^{p-2}z'\cdot (z-z')$,
which we use in form of
\begin{align*}
\frac{1}{p}|\nabla\phi|^p\ge\frac{1}{p}|\nabla\phi'|^p+|\nabla\phi'|^{p-2}\nabla\phi'
\cdot\nabla(\phi-\phi').
\end{align*}
Since $\nabla\phi'$ is independent of $\nabla(\phi-\phi')$, and since the expectation
of the latter vanishes as we discussed above based on reflection symmetry, 
this implies (\ref{ao25}). 
Hence we have completed the argument for (\ref{ao22}) and thus (\ref{ao23}).

\medskip

It remains to post-process (\ref{ao23}) to (\ref{ao02}). To this purpose, we
prove a partial reverse of (\ref{ao25}), namely
\begin{align}\label{ao26}
f(N)\lesssim f(N')\quad\mbox{provided}\;N\le 2N'.
\end{align}
Indeed, now we start from $\frac{1}{p}|z|^p$ $\le\frac{1}{p}|z'|^p$ $+|z|^{p-2}z\cdot (z-z')$,
in form of
\begin{align*}
\frac{1}{p}|\nabla\phi|^p\le\frac{1}{p}|\nabla\phi'|^p+|\nabla\phi|^{p-1}|\nabla(\phi-\phi')|.
\end{align*}
We now apply H\"older's inequality in probability to the effect of
\begin{align*}
f(N)&\le f(N')+(pf(N))^\frac{p-1}{p}(pf(N-N'))^\frac{1}{p}\nonumber\\
&\stackrel{(\ref{ao25})}{\le}f(N')+(pf(N))^\frac{p-1}{p}(pf(N'))^\frac{1}{p}
\quad\mbox{provided}\;N\le 2N',
\end{align*}
so that (\ref{ao26}) follows from Young's inequality.

\medskip

Equipped with (\ref{ao25}) and (\ref{ao26}) we now may pass from
(\ref{ao23}) to (\ref{ao02}), where we use that by (\ref{ao19}) and shift-covariance
of $|\nabla\phi|^p$ we have
\begin{align}\label{ao27}
\mathbb{E}F\le\mathbb{E}^\frac{1}{8}F^8\lesssim
\begin{cases}
\ln^\frac{p}{2}R & \text{if $d=2$},\\
1 & \text{if $d \ge 3$}.
\end{cases}
\end{align}
We fix an $N_0\in\mathbb{N}$ with
\begin{align*}
N_0\approx 2R^d,
\end{align*}
so that by the concentration properties of the Poisson distribution
we have thanks to the factor of $2$
\begin{align}\label{ao28}
\mathbb{P}(N\ge N_0)\quad\mbox{is sub-algebraic in}\;R\;
\mbox{while}\quad\mathbb{P}\bigg(N\le\frac{N_0}{2}\bigg)\sim 1.
\end{align}
By (\ref{ao27}), the first item in (\ref{ao28}) transmits to
\begin{align}\label{ao28bis}
\lefteqn{\mathbb{E}I(N\ge N_0)(F-f(N_0))_+^4}\\
&\le\big(\mathbb{P}(N\ge N_0)\mathbb{E}F^8\big)^\frac{1}{2}
\quad\mbox{is sub-algebraic in}\;R.\nonumber
\end{align}
Once more by (\ref{ao27}), the second item in (\ref{ao28}) implies by Chebyshev
\begin{align}\label{ao30}
f(N_0)\stackrel{(\ref{ao26})}{\lesssim} f\bigg(\frac{N_0}{2}\bigg)\lesssim\mathbb{E}[f(N)]=\mathbb{E}F
\lesssim
\begin{cases}
\ln^\frac{p}{2}R & \text{if $d=2$}\\
1 & \text{if $d\ge3$}.
\end{cases}
\end{align}
Now for the complementary portion to (\ref{ao28bis}), we may appeal to (\ref{ao25}) 
in order to connect to (\ref{ao23}):
\begin{align}\label{ao29}
\mathbb{E}I(N\le N_0)(F-f(N_0))_+^4\le\mathbb{E}(F-f(N))^4\lesssim
\begin{cases}
\ln^{2(p-1)}R & \text{if $d=2$},\\
R^{4-2d} & \text{if $d\ge3$}.
\end{cases}
\end{align}
The desired (\ref{ao02}) now follows from combining (\ref{ao28bis}) with (\ref{ao29})
and inserting (\ref{ao30}).
\end{proof}

\subsection{A \texorpdfstring{$L^\infty$}{L infinity}-estimate}
In this section we improve the $L^0$ estimate \cite[Lemma 2.1]{HMO21} into a $L^{\infty}$ estimate for the displacement $|T(X)-X|$. The proof of Lemma \ref{lem:Linfty} is in the spirit of \cite[Lemma 3.1]{koch23}, see also \cite{Gut22} for an $L^\infty$ estimate in the regime $p \ge 2$ using different techniques. 
\begin{lemma}\label{lem:Linfty}
For every $\eps >0$ there exists a random radius $r_*<\infty$ a.~s.  such that for every $R \geq r_*$
 \begin{equation}\label{eq:Linfty}
   \abs{T \bra{X} - X} \leq  \eps R \quad \mbox{provided that} \; X \in (-R,R)^d.
 \end{equation}
\end{lemma}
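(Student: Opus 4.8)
The plan is to argue by contradiction using the local $p$-optimality of $T$ together with the already-established $L^0$-estimate \cite[Lemma 2.1]{HMO21}, which says that the volume fraction of points in $(-R,R)^d$ moved by a distance $\gg 1$ is $o(R^d)$. Suppose the conclusion fails, so that for some $\eps>0$ and arbitrarily large $R$ there is a point $X_0\in(-R,R)^d$ with $|T(X_0)-X_0|>\eps R$. Set $Y_0:=T(X_0)$, so the segment $[X_0,Y_0]$ has length $>\eps R$. The idea is that the cyclical monotonicity of $T$ forbids any other matched pair $(X,T(X))$ from having both endpoints ``deep inside'' a suitable neighborhood of this long segment unless that pair is itself essentially parallel and of comparable length; more precisely, one wants to exhibit a thin cylindrical (or tubular) region $\Sigma$ around $[X_0,Y_0]$, of length $\sim\eps R$ and width $\sim\delta\eps R$ for small $\delta$, such that every $X$ with $X,T(X)\in\Sigma$ is moved a macroscopic distance $\gtrsim\eps R\gg 1$. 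This is exactly the geometric mechanism in \cite[Lemma 3.1]{koch23}.

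The key steps, in order, would be: (i) Record the two-point inequality coming from $p$-cyclical monotonicity: for any two matched pairs $(X,T(X))$, $(X',T(X'))$,
\[
|X-T(X)|^p+|X'-T(X')|^p\le |X-T(X')|^p+|X'-T(X)|^p,
\]
and extract from it, when $|X-T(X)|$ is large, a cone/tube exclusion: if $X'$ lies close to the segment $[X_0,Y_0]$ but $T(X')$ lies on the ``wrong side'', the inequality is violated. (ii) Use this to show that a definite fraction of the Lebesgue volume of the tube $\Sigma$ around $[X_0,Y_0]$ must be occupied by points $X$ that are themselves displaced by $\gtrsim\eps R$: essentially, mass entering the tube from one cross-section must leave through a far cross-section, so a bounded-overlap covering of $\Sigma$ by $\sim 1/\delta$ sub-tubes forces $\gtrsim\delta^{d-1}(\eps R)\cdot(\delta\eps R)^{d-1}\sim(\eps R)^d$ worth of long-displacement points (after optimizing $\delta$, a fixed fraction $c(\eps)R^d$). (iii) Since $\Sigma\subset(-CR,CR)^d$ for a universal $C$, this contradicts the $L^0$-estimate applied on the scale $CR$: that estimate gives only $o(R^d)$ long-displacement points in $(-CR,CR)^d$, whereas we produced $\ge c(\eps)R^d$ of them, for all large $R$ in the (full-measure) event where the $L^0$-estimate holds. (iv) Conclude that outside an a.s.\ finite random radius $r_*$, \eqref{eq:Linfty} must hold; a standard intersection over a countable sequence $\eps=1/k$ upgrades this to the stated ``for every $\eps>0$'' form (choosing $r_*=r_*(\eps)$).

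The main obstacle is step (ii): turning the pointwise cyclical-monotonicity exclusion into a quantitative \emph{volume} lower bound on long-displacement points inside the tube. One has to be careful that $T$ is a bijection of point sets, not a transport map with a density, so the ``mass conservation'' argument must be phrased combinatorially — e.g.\ partition $\Sigma$ into $O(\eps R)$ slabs transverse to the segment direction, note that the net number of matched pairs crossing each interface is controlled, and argue that the geometry forces the segments $[X,T(X)]$ for $X$ near the core of $\Sigma$ to run roughly along the axis and hence have length comparable to the slab count, i.e.\ $\gtrsim\eps R$. Making the cone angle, the tube width $\delta$, and the transversal slab thickness all compatible with the two-point inequality \eqref{eq:plocopt}, and checking that the resulting region genuinely lies in a box of size $O_\eps(R)$ so that \cite[Lemma 2.1]{HMO21} can be invoked, is where the real work lies; everything else is Borel--Cantelli bookkeeping and the final countable intersection over $\eps=1/k$.
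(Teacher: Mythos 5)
Your overall strategy -- argue by contradiction, showing that one macroscopically long displacement $|T(X_0)-X_0|>\eps R$ would force $\gtrsim R^d$ Poisson points near $[X_0,T(X_0)]$ to be moved far, contradicting the $L^0$-estimate of \cite[Lemma 2.1]{HMO21} -- is genuinely different from the paper's, which is \emph{constructive}: for each $X$ the paper finds, in mesoscopic cubes arranged around $X$ in a bounded number of directions, points that are moved a moderate distance $L$ (this uses the $L^0$-estimate plus a Wasserstein/number-density bound to guarantee cubes of side $\gg\eps R$ contain $\gtrsim r^d$ points), and then uses $p$-cyclical monotonicity together with a geometric lemma (Lemma~\ref{lem:conebarrier}, a cone contained in the excluded region) to build cones whose union covers $\R^d\setminus B_{\bar\rho r}(X)$ and hence confines $T(X)$ to $B_{\bar\rho r}(X)$. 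So both proofs use the same three ingredients (cyclical monotonicity, the $L^0$-estimate, and a local number-density lower bound via the data term), but the paper deduces a barrier around every $T(X)$ directly, whereas you try to multiply the bad event.

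The gap in your proposal is the mechanism in step~(ii). You state the goal as showing that ``every $X$ with $X,T(X)\in\Sigma$ is moved a macroscopic distance'' and then appeal to a flux/mass-conservation argument across slabs; neither is quite the right thing. First, requiring $T(X)\in\Sigma$ is the wrong hypothesis (and gives at most a tautological bound $|T(X)-X|\le\mathrm{diam}\,\Sigma$); what one needs is that $X\in\Sigma$ forces $|T(X)-X|\gtrsim\eps R$ with no constraint on where $T(X)$ lands. Second, a single matched pair $(X_0,Y_0)$ carries only one unit of discrete mass through $\Sigma$, so ``mass entering must leave through a far cross-section'' does not by itself produce a positive fraction of long-displacement points, and it is unclear how to make that combinatorial bookkeeping rigorous for a bijection of point sets. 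The direct route, which avoids flux entirely, is the two-point inequality itself: with $d_0=|X_0-Y_0|$, $b=|X_0-X'|$, $a=|X'-Y_0|$, $\ell=|X'-T(X')|$, cyclical monotonicity plus the triangle bound $|X_0-T(X')|\le b+\ell$ give $d_0^p-a^p\le(b+\ell)^p-\ell^p$; since $x\mapsto(x+b)^p-x^p$ is strictly increasing for $p>1$ and $a+b\approx d_0$ when $X'$ is near the segment, this forces $\ell\gtrsim a$, so points in a thin tube near $X_0$ on the segment are displaced by $\gtrsim d_0-b\gtrsim\eps R$. Combined with the number-density bound (which you would still need, exactly as in the paper's Step 2) this yields the desired $\gtrsim_{\delta}(\eps R)^d$ long-displacement points inside $(-2R,2R)^d$ and the contradiction with \cite[Lemma 2.1]{HMO21}. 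With that replacement for your step~(ii) the contradiction route does go through; as written, though, the crucial quantitative step is not correct.
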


The proof is very similar to \cite[Lemma 2.2]{HMO21}. As opposed to the quadratic case the support of $T$ is not monotonic for general $p\ge 1$. To overcome this additional difficulty we need to argue by $p$-cyclically monotonicity to exploit the geometry of the support of the matching. To be more precise we consider a toy case. Let $0$ denote the origin in $\R^d$ and let $e_1=(1,0,\dots,0)$. We now think of the origin playing the role of a Poisson point, and $e_1$ being a Poisson point, close to $0$, which is transported by a moderate distance, in particular we may suppose $T(e_1) = e_1$. By $p$-cyclically monotonicity we may write
\[
|T(0)|^p \le |T(0) - e_1|^p + 1.
\]
The latter defines a constraint for $T(0)$ in which the origin is transported. Our aim is to understand the not admissible set in which $0$ is transported. In particular, we show in the next lemma that the region in which the origin is not transported contains a convex set, i.~e. a cone. 

\begin{lemma}\label{lem:conebarrier}
Let $1 < p < \infty$ and $\alpha \in (0, \pi)$. Define the domain $U \subset \R^d$ 
\[
U:= \{ x = (x_1, x') \in \mathbb{R}\times \R^{d-1} \ | \ |x|^p \ge ((x_1 - 1)^2 + |x'|^2)^\frac{p}{2} + 2 \}. 
\]
Then there exists a cone with vertex $v = (\bar x(p,\alpha), 0)$, aperture $\alpha$ and axis $(1,0,\dots,0)$ which lies in $D$. 
\end{lemma}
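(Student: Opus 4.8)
The plan is to show that far from the hyperplane $\{x_1 = 1/2\}$, in the half-space opening towards $+\infty$ along the axis $e_1$, the defining inequality of $U$ is comfortably satisfied, and that this region contains a cone. The natural parametrization is polar-type: write a generic point as $x = v + t\omega$, where $v = (\bar x, 0)$ is the prospective vertex, $t \ge 0$, and $\omega$ ranges over the spherical cap of half-angle $\alpha$ around $e_1$. I would reduce to the two-dimensional picture spanned by $e_1$ and the component of $x'$ (by rotational symmetry in $x'$), so that effectively $x = (x_1, s)$ with $s = |x'| \ge 0$, and the two competing quantities are $|x| = (x_1^2 + s^2)^{1/2}$ and $r := ((x_1-1)^2 + s^2)^{1/2}$, the distance to the point $e_1 = (1,0)$.

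The key estimate is the elementary inequality $|x|^p - r^p \ge \tfrac{p}{2}\,\min(|x|, r)^{p-2}\,(|x|^2 - r^2)$ when $|x| \ge r$, which follows from convexity of $t \mapsto t^{p/2}$ applied to $t = |x|^2$ and $t = r^2$ (this is exactly the convexity inequality already used in the proof of Lemma~\ref{lem:1}, e.g.\ in establishing \eqref{ao25}). Since $|x|^2 - r^2 = 2x_1 - 1$, the membership condition $|x|^p \ge r^p + 2$ is implied by
\[
\tfrac{p}{2}\,\min(|x|, r)^{p-2}\,(2x_1 - 1) \ge 2.
\]
On a cone with vertex $v = (\bar x, 0)$, axis $e_1$ and half-angle $\alpha < \pi$, every point satisfies $x_1 \ge \bar x + c\,|x - v|$ for a constant $c = c(\alpha) > 0$ (namely $c = \cos\alpha$ if $\alpha < \pi/2$; if $\alpha \ge \pi/2$ one only gets $x_1 \ge \bar x$, so one should restrict to $\alpha < \pi/2$ first and then note that a cone of aperture $\alpha$ contains one of smaller aperture — wait, that is the wrong direction). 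Let me instead handle the two cases: for $\alpha < \pi/2$ one has the clean bound $x_1 - \bar x \ge (\cos\alpha)|x-v| \ge 0$ and also $\min(|x|, r) \ge x_1 - 1 \ge \bar x - 1$ (using $r \ge |x_1 - 1|$ and, on the cone, $|x| \ge |x_1| \ge x_1 \ge \bar x$ once $\bar x \ge 1$, so $\min(|x|,r) \ge \min(\bar x, \dots)$; more carefully $\min(|x|,r)\ge x_1-1$ suffices since $|x|\ge x_1 > x_1-1$ and $r\ge x_1-1$). Hence choosing $\bar x$ large enough that $\tfrac{p}{2}(\bar x - 1)^{p-2}(2\bar x - 1) \ge 2$ — possible since the left side $\to \infty$ as $\bar x \to \infty$ (here $p > 1$ is used so that either $p - 2 > -1$ keeps things bounded below, or if $p<2$ one uses $\min(|x|,r)$ could be small; see the obstacle below) — makes the inequality hold on the whole cone, so the cone lies in $U$ (the paper writes $D$, evidently a typo for $U$).

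\textbf{Main obstacle.} The subtlety is the case $1 < p < 2$, where $\min(|x|,r)^{p-2}$ is a \emph{decreasing} function of $\min(|x|,r)$, so large distances hurt rather than help. In that regime one cannot simply push $\bar x \to \infty$ with $t$ unbounded: for points far out along the cone, $\min(|x|,r) \sim t$ is large and $\min(|x|,r)^{p-2} \to 0$, while $2x_1 - 1 \sim 2 t\cos\alpha$ grows only linearly, giving a product $\sim t^{p-1} \to \infty$ — so in fact it still works, since $p - 1 > 0$. The genuinely delicate point is therefore \emph{near} the vertex and for directions $\omega$ nearly orthogonal to $e_1$ (i.e.\ $\alpha$ close to $\pi$), where $x_1 - \bar x$ can be slightly negative; there I would simply take $\alpha' = \min(\alpha, \pi/3)$, prove the statement for the cone of aperture $\alpha'$, and observe that by choosing the vertex of the $\alpha$-cone far enough to the right its intersection with a fixed neighborhood of the axis is contained in a translate of the $\alpha'$-cone — or, more cleanly, prove it directly for arbitrary $\alpha<\pi$ by noting every point of the $\alpha$-cone with vertex $(\bar x,0)$ has $x_1 \ge \bar x - |x-v| \ge \bar x - (\text{something})$, which is still $\ge$ a large constant once $\bar x$ is large, because points with small $x_1$ on this cone have $|x-v|$ correspondingly bounded. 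Verifying that this bookkeeping closes — i.e.\ that $\inf$ over the cone of $\tfrac{p}{2}\min(|x|,r)^{p-2}(2x_1-1)$ is $\ge 2$ for suitable $\bar x(p,\alpha)$ — is the one computation I would actually carry out in detail; everything else is routine.
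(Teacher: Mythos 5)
Your central idea---linearizing $t \mapsto t^{p/2}$ at one of the two squared radii and using $|x|^2 - r^2 = 2x_1 - 1$---is a genuinely different route from the paper's and is arguably more unified. (The paper first establishes that $\partial_y F$ has a definite sign, reducing to a one-dimensional estimate on the axis for $p\ge 2$ resp.\ on the cone boundary for $1<p\le 2$, and then invokes Bernoulli's inequality resp.\ the mean value theorem.) However, your key inequality is false in precisely the regime where a genuine cone, rather than a half-space, is needed.

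For $1 < p < 2$ the map $t \mapsto t^{p/2}$ is \emph{concave}, not convex. The correct one-sided linearization is therefore at the larger point: with $a = |x|^2 \ge b = r^2$, concavity gives $f(a) - f(b) \ge f'(a)(a - b)$, i.e.\ $|x|^p - r^p \ge \tfrac{p}{2}\max(|x|,r)^{p-2}(|x|^2 - r^2)$, whereas the $\min$ version you state is an \emph{upper} bound in that regime. Concretely, with $p = 3/2$, $|x| = 2$, $r = 1$, the left-hand side is $2\sqrt2 - 1 \approx 1.83$ while $\tfrac{p}{2}\min^{p-2}(|x|^2-r^2) = \tfrac34 \cdot 1 \cdot 3 = 2.25$. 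Since $p-2<0$, replacing $\min$ by $\max$ only weakens your lower bound, so the subsequent estimates must be rechecked; they do still close, because on the cone $\max(|x|,r)=|x|\le \bar x + |x-v|$, so
\[
\max(|x|,r)^{p-2}\,(2x_1-1) \;\gtrsim\; (\bar x + t)^{p-2}\bigl(\bar x + t\cos\tfrac{\alpha}{2}\bigr) \;\gtrsim_\alpha\; (\bar x + t)^{p-1},
\]
which exceeds $4/p$ for all $t\ge 0$ once $\bar x$ is chosen large enough depending on $p$ and $\alpha$. Also, your citation of the convexity used for \eqref{ao25} is not quite apt: that step uses convexity of $z\mapsto|z|^p$ on $\R^d$, valid for all $p\ge 1$, while what you invoke is convexity of $t\mapsto t^{p/2}$ on $\R_{\ge 0}$, which flips at $p=2$. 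In fact the vector-convexity inequality gives directly $|x|^p - r^p \ge p\,r^{p-2}(x_1-1)$ for every $p>1$ (take $z=x$, $z'=x-e_1$), which closes the argument with no case distinction at all and would be the cleanest rendering of your idea. Finally, the ``$\alpha$ close to $\pi$'' worry is moot: with aperture meaning full opening angle, $\alpha<\pi$ gives half-angle $<\pi/2$, so $x_1\ge\bar x$ everywhere on the cone and the near-vertex bookkeeping you sketch is unnecessary.
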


\begin{proof}
We denote $y= | x'|$ and introduce the function 
$
F(x,y) := (x^2 + y^2)^\frac p 2 - ((x-1)^2 + y^2)^\frac p 2.
$

\medskip
{\sc Case $p \ge 2$}. We claim that there exists a constant $c_0 > 0$ such that for $c\ge c_0$ the half-space
\begin{equation}\label{eq:subsetpge2}
C_p = \cur{x \ \colon \ x_1 \ge c} \quad \text{is contained in $U$}. 
\end{equation}
We start by noticing that by a direct calculation since $p \ge 2$, for $x \in C_p$ and $c \ge \frac12$ we have $\partial_y F(x,y) \ge 0$. Thus for $c \ge \frac12$, in order to ensure \eqref{eq:subsetpge2} it is enough to show $F(x_1, 0) \ge 2.$
We note that for $x \in C_p$ and $c \ge 1$, the inequality $F(x_1,0) \ge 2$ is satisfied if the following holds true
\[
\bra{1 + \frac 1 {c-1}}^p - 1 \ge \frac 2 {(c-1)^p}.
\]
By Bernoulli's inequality, (i.~e. $(1+u)^n \ge 1 + nu$ for $n \ge 0, u > -1$) the latter is satisfied if
$
c \ge 1 + \bra{\frac 2 p}^{\frac 1 {p-1}}.
$
Thus, choosing $c_0 = 1 + \bra{\frac 2 p}^{\frac 1 {p-1}}$ yields \eqref{eq:subsetpge2}. 

\medskip
{\sc Case $1<p\le 2$}. We claim that there exists a constant $c_0 > 0$ such that for $c\ge c_0$ the cone
\begin{equation}\label{eq:subsetple2}
C_p = \cur{x \ \colon \ y \le \alpha (x_1 - c) } \quad \text{is contained in $U$}.
\end{equation}
We start by noticing that by a direct calculation since $1< p \le 2$, for $x \in C_p$ and $c\ge \frac12$ we have $\partial_y F(x,y) \le 0$. 
%\textcolor{blue}{
%Indeed,
%\[
%\partial_y F(x,y) = p y \cur{ (x^2 + y^2)^{\frac{p}{2}-1} - ((x-1)^2 + y^2)^{\frac p 2 - 1}} \ge 0 \Leftrightarrow x \le \frac12.
%\]
%Since $x \in C_p, x_1 \ge \frac y \alpha + c$, thus if $c \ge \frac12$ for $x \in C_p$, $\partial_y F(x,y) \le 0$. 
%} 
Thus it suffices to show that for $x_1 \ge c$ it holds
$
F(x_1, \alpha (x_1 - c)) \ge 2. 
$
Let us denote by $g (z)$ the function $g(z) := (z^2 + \alpha^2 (x_1 - c)^2)^\frac p2$. By the mean value theorem, %for $c \ge 2 (1+ \alpha^2)$, 
there is $\xi \in [x_1 - 1, x_1]$ such that, for $1 < p \le 2$
\[
\begin{split}
F(x_1, \alpha(x_1 -c)) & = g(x_1) - g(x_1 - 1) \\
& = p \frac{\xi}{(\xi^2 + \alpha^2(x_1 - c)^2)^\frac{2-p}2} \\
& \ge p \frac{\xi}{(\xi^{2-p}+ \alpha^{2-p}(x_1 - c)^{2-p})} \\
& \ge p \frac{x_1 - 1}{(1+\alpha^{2-p})x_1^{2-p}  + \alpha^{2-p} c^{2-p}} =: h(x_1).
\end{split}
\]
Note that since $1<p\le2$ we have that $h(x_1) \rightarrow \infty$ for $x_1 \rightarrow \infty$ and is increasing for $x_1$ sufficiently large, hence we can choose $c_0 < \infty$ such that $h(c_0)\ge 2$. Finally, noting that for $x \in C_p$ we have $x_1 \ge \frac y \alpha + c \ge c_0$ yields \eqref{eq:subsetple2}.
\end{proof}

\begin{proof}[Proof of Lemma \ref{lem:Linfty}]

\medskip

{\sc Step 1}. Definition of $r_*=r_*(\eps)$ given $0<\eps\ll 1$ 
as the maximum of three $r_*$'s.
First, by \cite[Lemma 2.1]{HMO21}, there exists a (deterministic) length $L<\infty$ and 
the (random) length $r_*<\infty$ such that for $4R\ge r_*$,
the number density of the Poisson points in $(-2R,2R)^d$ 
transported further than the ``moderate distance'' $L$ is small in the sense of
\begin{align}\label{fw17}
\#\{\,X\in(-2R,2R)^d\,|\,|T(X)-X|>L\,\}\le(\eps 4R)^d.
\end{align}
Second, by Lemma \ref{lem:uppdata} we may also assume that $r_*$ is so large that for $R\ge r_*$,
the non-dimensionalized transportation distance of $\mu$ to its number density $n$
is small, and that by the concentration properties of the Poisson point process $n\approx 1$, in the sense of
\begin{align}\label{fw11}
W^p_{p,(-2R,2R)^d}(\mu,n)+\frac{(4R)^{d+p}}{n}(n-1)^p\le
(\eps 4 R)^{d+p}.
\end{align}
Third, w.~l.~o.~g.~we may assume that $r_*$ is so large that 
\begin{align}\label{fw14}
L\le\eps r_*.
\end{align}
We now fix a realization and $R\ge r_*$.

\medskip

{\sc Step 2}.

There are enough Poisson points on mesoscopic scales.
We claim that for any cube $Q\subset(-2R,2R)^d$ of ``mesoscopic'' side length 
\begin{align}\label{fw12}
r\gg\eps R
\end{align}
we have
%\footnote{The notation $A \gtrsim B$ means that there exists a constant $C>0$, which may only depend on $d$, such that $A \geq C B$. Similarly $A \lesssim B$ means that there exists a constant $C>0$, which may only depend on $d$, such that $A \leq C B$.}
%
\begin{align}\label{fw13}
\#\{\,X\in Q\,\}\gtrsim r^d. 
\end{align}
Indeed, it follows from the definition of $W_{p,(-2R,2R)^d}(\mu,n)$ that for any
Lipschitz function $\eta$ with support in $Q$ we have
\begin{align*}
\big|\int\eta d\mu-\int\eta ndy\big|\le({\rm Lip}\eta)\big(\int_Qd\mu+n|Q|\big)^\frac{1}{p'}
W_{p, (-2R,2R)^d}(\mu,n).%\ge n\int_{(-2R,2R)^d}\eta,
\end{align*}
where $\frac1p + \frac1{p'}=1$. Indeed, by H\"older's inequality 
\[
\begin{split}
\big|\int\eta d\mu-\int\eta ndy\big| & = \big| \int \bra{\eta\bra{x} - \eta \bra{y}} d\pi \bra{x,y} \big| \\
& \leq \bra{ {\rm Lip} \eta} \int |x-y| d \pi \\
& \le ({\rm Lip}\eta)\big(\int_Qd\mu+n|Q|\big)^\frac{1}{p'}
W_{p, (-2R,2R)^d}(\mu,n).%\ge n\int_{(-2R,2R)^d}\eta
\end{split}
\]
We now specify to an $\eta\le 1$ supported in $Q$, to the effect of $\int\eta d\mu$
$\le\int d\mu$ $=\#\{\,X\in Q\,\}$, so that by Young's inequality and the trivial inequality $\bra{x+y}^\frac1p \le x^\frac1p + y^\frac1p$ for $x,y >0, p>1$ we have
\begin{align}\label{fw25}
\int\eta ndy&\lesssim
\#\{\,X\in Q\,\}+({\rm Lip}\eta)^{p}W^{p}_{p,(-2R,2R)^d}(\mu,n)\nonumber\\
&+({\rm Lip}\eta)\big(n|Q|\big)^\frac{1}{p'}
W_{p, (-2R,2R)^d}(\mu,n).%\ge n\int_{(-2R,2R)^d}\eta.
\end{align}
At the same time, we may ensure $\int_{(-2R,2R)^d}\eta\gtrsim r^d$ and 
${\rm Lip}\eta\lesssim r^{-1}$, 
so that by (\ref{fw11}), which in particular ensures $n\approx 1$, (\ref{fw25})
turns into
\begin{align*}
r^d\lesssim 
\#\{\,X\in Q\,\}+r^{-p}(\eps R)^{d+p}+r^{\frac{d}{p'}-1}(\eps R)^{\frac{d}{p}+1}.
\end{align*}
Thanks to assumption (\ref{fw12}) we obtain (\ref{fw13}).

\medskip
{\sc Step 3}.
Iteration. At mesoscopic distance around a given point $X\in(-R,R)^d$, 
there are sufficiently
many Poisson points that are transported only over a moderate distance in any direction. 
More precisely,  %we claim that provided (\ref{fw12}) holds, 
%\begin{equation}\label{eq:linftystep3}
%\begin{split}
%& \text{for any unitary vector $e$ there exists a Poisson point $X' \in B_{\sqrt{2}r} \bra{X + 2 r e}$} \\
%&  \text{such that $|T (X') - X'| \le L$.}
%\end{split}
%\end{equation}
we claim that for any cube
$Q\subset(-2R,2R)^d$ of side-length satisfying (\ref{fw12}) 
we have
\begin{align}\label{fw16}
\mbox{there exists}\;X\in Q\;\mbox{with}\;|T(X)-X|\le L.
\end{align}
We suppose that (\ref{fw16}) were violated for some cube $Q$. 
%In view of (\ref{fw14}),
%all Poisson points in $X$ would be transported a distance larger than $L$.
By (\ref{fw13}), there are $\gtrsim r^d$ of such points. By assumption
(\ref{fw12}), there are thus $\gg(\eps R)^d$ Poisson points in $(-2R,2R)^d$
that get transported by a distance $>L$, which contradicts (\ref{fw17}).
%We now turn to \eqref{eq:linftystep3}. If \eqref{eq:linftystep3} was violated $B_{\sqrt{2}r} \bra{X + 2 r e}$ contains a cube $Q$ of side length $r$ satisfying \eqref{fw17} such that 
%\[
%|T(X) - X| > L \;\mbox{for any}\;X \in Q,
%\]
%which contradicts \eqref{fw16}. 

\medskip

{\sc Step 4}. Building barriers. We show that for any Poisson point $X$ and any unitary vector $e$, if we are given a cube $Q^X$ with barycenter $X$ of side-length satisfying \eqref{fw12} and $X'\in Q+2r e$ such that
%$X'\in B_{\sqrt{2}r} \bra{X + 2 r e}$ such that 
\[
| T(X') - X'| \le L
\]
there exists a cone $C_{X,X'}$ with vertex $X + r \rho \frac{X'-X}{|X'-X|}$ for some finite constant $\rho = \rho (p,d) > 0$, aperture $1$ and axis $\frac{X'-X}{|X'-X|}$ such that $T(X) \notin C_{X,X'}$. Indeed, by $p$-cyclically monotonicity of $T$ we get
\[
 |T(X)-X|^p \le |T(X)-X|^p + |T(X')-X'|^p \le |T(X')-X|^p + |T(X)-X'|^p.
\]
By a change of coordinates we may assume that $X = 0$ and $X'=(\tau,0)$ with $\tau \in (\frac32 r, \frac{\sqrt{26}}2r)$. In particular, writing $T(X) = (y_0, y_1)$ with $y_0 \in \R$ and $y_1 \in \R^{d-1}$ we get
\[
|T(X)|^p \le (L + \tau)^p + (|y_0 - \tau|^2 + |y_1|^2)^\frac{p}2.
\]
Introduce $(\tilde{y}_0, \tilde{y}_1) = \frac1\tau (y_0,y_1)$. Then we have
\[
\begin{split}
|\tilde{y}|^p & \le (|\tilde{y}_0 - 1|^2 + |\tilde{y}_1|^2)^\frac{p}2 + \bra{\frac{L}{\tau} + 1}^p \\
& \stackrel{\eqref{fw14}, \eqref{fw12}}{\le} (|\tilde{y}_0 - 1|^2 + |\tilde{y}_1|^2)^\frac{p}2 + 2.
\end{split}
\]
In particular by Lemma \ref{lem:conebarrier} there exists (going back to the original coordinates) a cone $C_{X,X'}$ with vertex $X + r \rho \frac{X'-X}{|X'-X|}$, aperture $1$ and axis $\frac{X'-X}{|X'-X|}$ such that $T(X) \notin C_{X,X'}$.

\medskip
{\sc Step 5}.  All Poisson points are transported over distances $\ll R$.  We claim that for all Poisson points $X$ 
\begin{equation}\label{fw21}
|T(X) - X| \lesssim \eps R \quad \mbox{provided}\; X \in (-R,R)^d.
\end{equation}
Choosing $c(d,p)$ directions $e_i$, we get points $\cur{X_i}_{i=1}^{c(d)}$ with $X_i \in Q^{X_i} + 2 re_i$ for some finite constants $\rho_i >0$, cones $C_{X,X_i}$ with vertexes $X + r \rho_i \frac{X_i - X}{|X_i-X|}$, aperture $1$ and axes $\frac{X_i - X}{|X_i -X|}$ and a finite constant $\rho_i \le \bar{\rho} < \infty$ for every $i$ such that 
\[
T(X) \notin \bigcup_{i=1}^{c(d)} C_{X,X_i} \quad\mbox{and}\quad \R^d \setminus B_{\bar \rho r} (X) \subset \bigcup_{i=1}^{c(d)} C_{X,X_i}.
\]
In particular, 
\[
|T(X)-X| \le \bar \rho r \lesssim r.
\]
Since (\ref{fw12}) was the
only constraint on $r$, we obtain (\ref{fw21}).
\end{proof}

%%%%%%%%%%%%
%%%%%%%%%%%%
%%%%%%%%%%%%
%%%%%%%%%%%%
%%%%%%%%%%%%
%%%%%%%%%%%%
\subsection{Application of the \texorpdfstring{$p$}{p}-Harmonic Approximation Theorem}\label{sec:mainsteps}
%Let us denote by $E_p$ the $p$-energy 
%\[
%E_p (R) := \frac1{R^d} \sum_{X\in B_R\;\mbox{or}\;T(X)\in B_R} |T(X)-X|^p.
%\]

\begin{lemma}\label{lem:harmonicapprox}
Let $p>1$. There exist a constant $C$ and a random radius $r_* < \infty$ a.~s. such that for every $R\geq r_*$ we have
\begin{equation}\label{eq:L2statement}
\frac{1}{R^d}\sum_{X \in B_R \; \mbox{or} \; T \bra{X} \in B_R} |T\bra{X}  - X|^p  \leq  C \ln^{\frac p2} R.
\end{equation}
\end{lemma}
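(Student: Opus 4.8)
The plan is to run the harmonic-approximation bootstrap of \cite[Lemma 2.3]{HMO21} in its $p$-cost form, feeding the quantitative $p$-harmonic approximation \cite[Theorem 1.1]{koch23} with the trivial energy bound coming from Lemma \ref{lem:Linfty} and the data bound of Theorem \ref{lem:uppdata}, and then iterating over scales. It suffices to establish \eqref{eq:L2statement} along the random approximately dyadic sequence of radii produced by Theorem \ref{lem:uppdata}, since the left-hand side changes by at most a dimensional factor when $R$ is multiplied by a bounded factor. First I would fix $0<\eps\ll1$ (depending only on $d$, $p$, and the smallness threshold in \cite[Theorem 1.1]{koch23}) and invoke Lemma \ref{lem:Linfty}: after enlarging $r_*$, $|T(X)-X|\le\eps R$ for all $X\in(-R,R)^d$ and $R\ge r_*$, and by concentration of the Poisson process we may also assume $\#\{X\in B_{2R}\}\le CR^d$. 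Since $|T(X)-X|\le\eps R$ forces $X\in B_{2R}$ whenever $T(X)\in B_R$, summing the pointwise bound gives $E_p(R)\le C\eps^pR^p$, so the non-dimensionalised energy $E_p(R)/R^p$ is as small as we like, uniformly in $R\ge r_*$. On the other side, Theorem \ref{lem:uppdata} yields $D_p(R)\le C\ln^{p/2}R$, hence $D_p(R)/R^p\to0$, and the $L^0$-estimate \cite[Lemma 2.1]{HMO21} makes the volume fraction of points displaced beyond a fixed moderate length $o(R^d)$. Thus for every $R\ge r_*$ we sit in the perturbative window of \cite[Theorem 1.1]{koch23}.

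Next I would extract the one-step improvement, the $p$-cost analogue of \cite[Lemma 2.3]{HMO21}. Fixing a small dyadic ratio $\theta\in(0,1)$, \cite[Theorem 1.1]{koch23} produces a $p'$-harmonic $\Phi$ on a slightly smaller ball with $T(X)-X$ close in $L^p$ to the gradient field $|\nabla\Phi|^{p'-2}\nabla\Phi$, the error being controlled by $D_p(R)$ together with the $L^0$-error and terms of strictly higher order in $E_p(R)/R^p$. Interior $C^{1,\alpha}$-regularity of $p'$-harmonic functions then turns this into a decay, under $R\mapsto\theta R$, of the non-dimensionalised \emph{excess} (the energy of the displacement after subtracting its mean, i.e. the flux) by a factor that can be made $\le\tfrac12$ by choosing $\theta$ small, at the cost of a volume-fraction factor $\theta^{-d}$ in front of $D_p(R)$ and the errors; the flux itself is bounded through the number-density mismatches $n_\mu-1,\ n_\nu-1=O(R^{-d/2})$, quantities already encoded in $D_p$.

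Finally I would iterate this improvement over the increasing geometric chain of scales joining $r_*$ to $R$. The trivial input $E_p(r_*)=O(1)$ is damped geometrically and becomes negligible after the $\approx c\ln R$ steps spanning $[r_*,R]$; the error terms, damped by the same geometric factor, contribute $O(1)$ in total; and the data terms accumulate, against a convergent geometric series, to $\lesssim\ln^{p/2}R$ (using that $D_p$ grows no faster than $\ln^{p/2}$). In this way the local energy inherits the growth of the data term, $E_p(R)\lesssim\ln^{p/2}R$, first along the sequence of Theorem \ref{lem:uppdata} and then, by the initial reduction, for all $R\ge r_*$, which is \eqref{eq:L2statement}.

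The main obstacle is closing the iteration in the $p$-cost setting. One must verify that the excess-decay factor coming from $C^{1,\alpha}$-regularity of $p'$-harmonic functions genuinely dominates the $\theta^{-d}$ loss incurred by restricting to the smaller ball, so that the per-step contraction constant can be pushed below $1$; and one must track the three error mechanisms --- the $L^0$-estimate, the interchange between ``$X\in B_R$'' and ``$T(X)\in B_R$'', and the flux / number-density terms --- carefully enough that, after summation over the $O(\ln R)$ iteration steps, none of them overtakes the target $\ln^{p/2}R$. Everything else is a transcription of \cite[Lemma 2.3]{HMO21} with \cite[Theorem 1.1]{koch23} replacing the quadratic harmonic approximation.
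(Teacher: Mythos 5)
Your high-level strategy is the right one and matches the paper's: obtain the trivial non-dimensionalized energy bound $E_p(R)/R^p\lesssim\eps$ from Lemma~\ref{lem:Linfty}, obtain $D_p(R)\lesssim\ln^{p/2}R$ from Theorem~\ref{lem:uppdata}, verify that these place you in the perturbative window of~\cite[Theorem 1.1]{koch23}, and then iterate a one-step improvement over scales until the local energy inherits the $\ln^{p/2}R$ asymptotics of the data term. However, the specific one-step improvement you propose does not close the iteration, and this is a genuine gap.

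You extract the one-step contraction from interior $C^{1,\alpha}$-regularity of $p'$-harmonic functions, i.e.\ a Campanato-type excess decay with factor $\sim\theta^{p\alpha}$ under $R\mapsto\theta R$. But the a priori input that must be damped by the iteration is the bound $E_p(r)\lesssim\eps\, r^p$, which grows like $r^p$ at the largest scale in the chain. Per step the scale increases by $\theta^{-1}$, so the cumulative factor to be beaten is $\theta^{-p}$; the $C^{1,\alpha}$ decay supplies only $\theta^{p\alpha}$ with $\alpha<1$, and $\theta^{p\alpha}\cdot\theta^{-p}=\theta^{-p(1-\alpha)}>1$ for all $\theta\in(0,1)$. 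Shrinking $\theta$ makes both the decay and the penalty more extreme without ever reversing the sign, and sending the approximation parameter $\tau\to0$ does not help because the $\theta^{p\alpha}$ fluctuation term survives. This is why the paper does \emph{not} set up a Campanato excess decay at all. Instead, the paper's one-step (Lemma~\ref{lem:harmonicapprox}, Step~2) splits the sum according to whether $|T(X)-X|$ is moderate ($\le L_\tau$) or large. The moderate part is simply bounded by $CL_\tau^p\lesssim\ln^{p/2}R$ using the point-count bound \eqref{eq:L2dataestpoint}. The large part is controlled by combining the $L^p$ closeness to $|\nabla\Phi|^{p'-2}\nabla\Phi$ from \eqref{eq:harmapproxoutput} with the $\sup$-bound \eqref{eq:harmapproxoutput2} and, crucially, the $L^0$-estimate \cite[Lemma 2.1]{HMO21}: the \emph{fraction} of far-transported points is $\le\tau$, with $\tau$ \emph{arbitrarily small}. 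This yields $E_p(R)\leq\tau\, E_p(32R)+C_\tau\ln^{p/2}R$ with a free parameter $\tau$, and Step~3 then fixes $\tau<32^{-p}$ so that $(32^p\tau)^k\to0$ as $k\to\infty$. The arbitrariness of $\tau$ -- coming from the $L^0$-estimate, which you only use to check the smallness hypothesis -- is exactly what the Campanato exponent $\theta^{p\alpha}$ cannot provide.

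A secondary issue is the iteration structure you describe. You speak of $\approx c\ln R$ steps ``joining $r_*$ to $R$'' with the ``trivial input $E_p(r_*)=O(1)$'' damped geometrically. The one-step from the $p$-harmonic approximation estimates the energy at the \emph{smaller} scale in terms of the energy at the \emph{larger} scale, so the iteration must move \emph{up} from $R$ to $32^kR\to\infty$, with the a priori bound invoked at the largest (not smallest) scale, and $k\to\infty$ rather than $k\approx\ln R$. (Also note $E_p(r_*)\lesssim r_*^p$, not $O(1)$ -- it is $E_p(r_*)/r_*^p$ that is $O(1)$.) Once these two points are corrected, the remainder of your outline is a faithful transcription of \cite[Lemma 2.3]{HMO21}.
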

\begin{proof}
The proof relies on the $p$-harmonic approximation result \cite[Theorem 1.1]{koch23}. This result establishes that for any $0<\tau\ll 1$, there exists
an $\eps>0$, a constant $C>0$ (which does not depend on $\tau$) and a constant $C_\tau<\infty$ such that provided for some $R$
\begin{equation}\label{eq:hypharmapprox}
\frac1{R^p} E_p(4R) + \frac1{R^p} D_p(4R)\leq \eps
\end{equation}
there exists a $p$-harmonic gradient field $\nabla \Phi$ such that
\begin{equation}\label{eq:harmapproxoutput}
\frac{1}{R^d} \sum_{X\in B_R\;\mbox{or}\;T(X)\in B_R}  \left|T \bra{X} - X -|\nabla \Phi(X)|^{p'-2} \nabla \Phi(X)\right|^p \leq \tau E_p(4R) + C_\tau D_p(4R),
\end{equation}
and\footnote{Note that in \cite[(2.21)]{HMO21} the constants on the right hand side were both labeled $C_\tau$. However, the constant which controls $\sup_{B_{2R}} |\nabla \Phi|^2$ in \cite[(2.21)]{HMO21} does not depend on $\tau$. This is important in the proof of \eqref{eq:L2basestepdyadic}. To avoid confusion we labeled the constants in \eqref{eq:harmapproxoutput} and \eqref{eq:harmapproxoutput2} differently.}
\begin{equation}\label{eq:harmapproxoutput2}
\sup_{B_{2R}} |\nabla \Phi|^\frac p {p-1} \leq C \bra{E_p(4R) + D_p(4R)},
\end{equation}
where $p'$ is the conjugate exponent of $p$.
The fraction $\tau$ will be chosen at the end of the proof. %Note that in \eqref{eq:datatermp} $D_p \bra{R}$ is defined on boxes $(-R,R)^d$ while \cm{[Lukas' notes, p-Harmonic approximation]} \eqref{eq:hypharmapprox} requires balls. 
%Since $B_{6R} \subseteq (-6R, 6R)^d$ we may assume that \eqref{eq:hypharmapprox} holds also for $B_{6R'}$ for $R'$ close to $R$, see Lemma \ref{lem:localization}.

%%%%%%%%%%%%
%Recall $E(R)$ and $D(R)$ from \eqref{eq:energy} and \eqref{eq:data}. We want to apply the harmonic approximation theorem from \cite[Theorem 1.4]{GHO}. To this end, we need to ensure the main condition that for $\eps >0$ we have
%\begin{equation}\label{eq:hypharmapprox}
%\frac1{R^2} E(6R) + \frac1{R^2} D(6R)\leq \eps.
%\end{equation}
%By Poisson concentration (see Lemma \ref{lem:Poiconcentration} ) we can assume that for $R\geq r_*$ we have $\frac{\mu(B_R)}{R^d}\in \sqa{\frac12,2}.$ Fix $\eps>0$. 
\medskip
{\sc Step 1}. Definition of $r_*$ depending on $\tau$. For $0< \tau \ll 1$ let $\eps = \eps \bra{\tau}$ be as above. %Arguing as in {\sc Step 2} of the proof of \cite[Lemma 2.6]{HMO21} we can show that there exist a constant $C$ and a random radius $r_*< \infty$ a.~s. such that for any dyadic radii $R \ge r_*$
%\begin{equation}\label{eq:concnumbdens}
%\frac{R^p}{n}(n-1)^p \le C \ln^\frac p2 R. 
%\end{equation}
%Combining \eqref{eq:concnumbdens} with Lemma \ref{lem:uppbou} and \eqref{eq:datatermp} 
By Theorem \ref{lem:uppdata} we may assume that $r_*$ is large enough so that for a (random) sequence of approximately dyadic radii $R\geq r_*$
\begin{equation}\label{eq:L2upperdata}
D_p(R) \leq C \ln^\frac p2 R.
\end{equation}  
Moreover, by Theorem \ref{lem:uppdata} possibly enlarging $r_*$ and we may assume that for a (random) sequence of approximately dyadic radii\footnote{by the footnote of \cite[Theorem 1.1]{koch23} it suffices that \eqref{eq:hypharmapprox} is satisfied for a sequence of radii $4R'\sim 4R$, thus we do not rename the sequence in \eqref{eq:L2bounddata}}
\begin{equation}\label{eq:L2bounddata}
\frac{D_p (4R)}{R^p} \leq \frac{\eps}2 .
\end{equation}
Note that only the bound \eqref{eq:L2upperdata} is specific to $d=2$. Moreover, the estimate \eqref{eq:L2bounddata} is not sharp, but it is enough for our purpose. From now on, we restrict ourselves to the sequence of approximately dyadic radii $R$ coming from \eqref{eq:L2upperdata} and \eqref{eq:L2bounddata}, which we may do w.~l.~o.~g.~ for \eqref{eq:L2statement}. Note that by the bound on $D_p\bra{4R}$ in \eqref{eq:L2bounddata} and the second and fourth term in the definition of $D_p\bra{R}$ in \eqref{eq:datatermp}
\begin{equation}\label{eq:L2dataestpoint}
\# \bra{\cur{X \in B_R} \cup \cur{ T(X) \in B_R}} \leq C R^d.
\end{equation}
Moreover, we may assume that $r_*$ is large enough so that \eqref{eq:Linfty} holds. Since $B_{4R}\subset (-4R,4R)^d$ we may sum \eqref{eq:Linfty} over $B_R$ to obtain for $R\geq r_*$ 
\[
\frac{1}{(4R)^d}\sum_{X\in B_{4R}} |T \bra{X} - X|^p \leq \frac\eps4 R^p.
\]
By symmetry, potentially enlarging $r_*$, we may also assume that \eqref{eq:Linfty} holds with $X$ replaced by $T \bra{X}$ so that for $R \ge r_*$ both
\begin{equation}\label{eq:L2preimageingoodball}
T \bra{X} \in B_R \Rightarrow X \in B_{2R}
\end{equation}
and %{\color{red} why the $1/4$? Below you have $2/4=1/2$?}
\[
\frac{1}{(4R)^d}\sum_{T\bra{X}\in B_{4R}} |T \bra{X} - X|^p \leq \frac\eps4 R^p,
\]
thus
\begin{equation}\label{eq:energyboundharmapprox}
E_p \bra{4R} = \frac{1}{(4R)^d}\sum_{X\in B_{4R}\;\mbox{or}\;T(X)\in B_{4R}} |T \bra{X} - X|^p \leq \frac\eps2 R^p,
\end{equation}
and in particular \eqref{eq:hypharmapprox} holds.
Finally by \cite[Lemma 2.1]{HMO21} we may assume, possibly enlarging $r_*$, that there exists a deterministic constant $L_\tau$ and for $R\geq r_*$ we both have 
\begin{equation}\label{eq:L2ergodic}
\# \bra{\cur{X \in Q_R \ | \ \abs{T \bra{X} - X} > L_\tau} \cup \cur{T(X) \in Q_R \ | \ \abs{T \bra{X} - X} > L_\tau}} \leq \tau R^d.
\end{equation}
and
\begin{equation}\label{eq:L2LleqlnR}
L_\tau^p \leq \ln^\frac p2 R.
\end{equation}
%Finally, by symmetry we may assume, eventually enlarging $r_*$, that \eqref{eq:Linfty} holds with $T \bra{X}$ replaced by $X$. This in particular implies that
%\begin{equation}\label{eq:L2preimageingoodball}
%T \bra{X} \in B_R \Rightarrow X \in B_{\bra{1+\eps}R}.
%\end{equation}

\medskip
{\sc Step 2.} 
Application of harmonic approximation. For all $R \geq r_*$
\begin{equation}\label{eq:L2basestep}
E_p \bra{R} \leq {\tau} E_p \bra{32 R} + C_\tau \ln^\frac p2 R.
\end{equation}
We start by showing that for the (random) sequence of approximately dyadic radii of {\sc Step 1} it holds for $R \ge r_*$
\begin{equation}\label{eq:L2basestepdyadic}
E_p \bra{R} \leq {\tau} E_p \bra{4 R} + C_\tau \ln^\frac p2 R.
\end{equation}
We split the sum according to whether the transportation distance is moderate or large. On the latter we use the harmonic approximation:
%Since \eqref{eq:hypharmapprox} holds, we may apply the harmonic approximation to get a constant $C_\tau$ and a harmonic gradient field $\Phi$ such that
%$$ \frac{1}{|B_R|} \sum_{X\in B_R\;\mbox{or}\;T(X)\in B_R}  \left|T \bra{X} - X -\nabla \Phi(X)\right|^2 \leq \tau E(6R) + C_\tau D(6R)$$
%and
%$$R^2 \sup_{B_{2R}} |\nabla^2\Phi|^2 + \sup_{B_{2R}} |\nabla \Phi|^2 \leq C_\tau \bra{E(6R) + D(6R)}.$$
%Consider the set of Poisson points  which are transported by a distance distance greater than $L$.
%Hence, we can estimate
\begin{align*}
 \lefteqn{\frac1{R^{d}} \sum_{\bra{X\in B_R\;\mbox{or}\;T(X)\in B_R}\:\mbox{and}\; \abs{T\bra{X}-X}>L_\tau} |T\bra{X} - X|^p}\\
 & \leq  \frac{2^p}{R^{d}} \sum_{{X\in B_R\;\mbox{or}\;T(X)\in B_R}} |T \bra{X} -X -|\nabla \Phi(X)|^{p'-2}\nabla\Phi \bra{X}|^p  \\
 & + \frac{2^p}{R^{d}} \sum_{\bra{X\in B_R\;\mbox{or}\;T(X)\in B_R}\:\mbox{and}\; \abs{T\bra{X}-X}>L_\tau} |\nabla\Phi(X)|^{\frac p {p-1}}  \\
& \stackrel{\eqref{eq:L2ergodic}}{\leq} 2^p\bra{\tau E_p(4R) + C_\tau D_p(4R)} + 2^p\tau \sup_{B_R} |\nabla \Phi|^2\\
& \stackrel{\eqref{eq:harmapproxoutput},\eqref{eq:harmapproxoutput2}, \eqref{eq:L2preimageingoodball}}{\leq}  2^p \tau E_p(4R) + 2^p C_\tau D(4R) + 2^p\tau C (E_p(4R)+D_p(4R))\\
 & =  2^p \tau \bra{1 + C} E_p(4R) + 2^p \bra{C_\tau+\tau C} D_p(4R).
\end{align*}
%Note that by the bound on $D\bra{6R}$ in \eqref{eq:hypharmapprox} and the second and fourth term in the definition of $D\bra{R}$ in \eqref{eq:data}
%\begin{equation*}
%\# \bra{\cur{X \in B_R} \cup \cur{ T(X) \in B_R}} \leq C R^d.
%\end{equation*}
%Indeed, by definition \eqref{eq:data} and \eqref{eq:hypharmapprox} we may deduce
%\begin{align*}
%\# \cur{X \in B_R} \leq \bra{1 + \sqrt{\eps}} \abs{B_R} < 2 \abs{B_R} && \# \cur{T \bra{X} \in B_R} \leq \bra{1 + \sqrt{\eps}}\abs{B_R} < 2 \abs{B_R}.
%\end{align*}
The last estimate combines to
\begin{align*} 
\lefteqn{\frac1{R^d}\sum_{X\in B_R\;\mbox{or}\;T(X)\in B_R}|T \bra{X} - X|^p}\\  & =  \frac1{R^d} \sum_{\bra{X\in B_R\;\mbox{or}\;T(X)\in B_R}\:\mbox{and}\; \abs{T\bra{X}-X}\leq L_\tau} |T \bra{X} - X|^p \\
& +  \frac1{R^d}\sum_{\bra{X\in B_R\;\mbox{or}\;T(X)\in B_R}\:\mbox{and}\; \abs{T\bra{X}-X}>L_\tau} |T \bra{X}-X|^p\\
& \stackrel{\eqref{eq:L2dataestpoint}}{\leq} C L_\tau^p +2^p \tau \bra{1 + C} E_p(4R) + 2^p \bra{C_\tau+\tau C} D_p(4R) \\
& \stackrel{\eqref{eq:L2upperdata},\eqref{eq:L2LleqlnR}}{\leq} 2 \tau \bra{1 + C} E_p \bra{4R} + \bra{2^p(C_\tau + \tau C)+C} \ln^\frac p 2 R.
\end{align*}
Relabeling $\tau$ and $C_\tau$, this implies \eqref{eq:L2basestepdyadic}. Let $R$ be any radius $R \ge r_*$. There exists a dyadic radius $R'\ge R$ satisfying \eqref{eq:L2basestepdyadic} so that\footnote{note that if $R_1<R_2$ are two consecutive approximately dyadic radii, by definition there exists a dyadic radius $R$ such that $R_1 \sim R$ and $R_2\sim 2R$. In particular, $\frac{R_2}{R_1} < 8$} %{\color{red} I do not understand the 32. I guess you want to use sth like $R\leq R' \leq k R$. But why should k be 8 and not 4?}
\[
E_p(R) \le C E_p (R') \le C \tau E_p(4R') + C C_\tau \ln^\frac p2 R' \le C\tau E_p (32 R) + C C_\tau \ln^\frac p2 R,
\]
for some constant $C>0$. Relabeling $\tau$ and $C_\tau$ yields \eqref{eq:L2basestep}. 

\medskip
{\sc Step 3.}
Iteration. Iterating \eqref{eq:L2basestep}, we obtain for any $k\geq 1$
\begin{align*}
 E_p(R) &\leq \tau E_p(32R) + C_\tau \ln^\frac p2 R \\
 & \leq  \tau^2 E_p(32^2R) + \tau C_\tau \ln^\frac p2 R + C_\tau \ln^\frac p2 R\\
 &\leq \tau^k E_p(32^k R) + C_\tau\sum_{l=0}^{k-1}  \tau^l \ln^\frac p2 R \\
 & \stackrel{\eqref{eq:hypharmapprox}}{\leq} \eps \bra{32^p \tau}^k R^p + C_\tau\sum_{l=0}^{k-1}  \tau^l \ln^\frac p2 R.
\end{align*}
%Since by \eqref{eq:hypharmapprox} $E(R)\leq \eps R^2$,  by \eqref{eq:L2upperdata} choosing $\tau$ such that $36 \tau < 1$, we have
%$$ \tau^k E(6^kR) \leq \tau^k \eps 6^{2k} R^2 \to_{k\to\infty} 0.$$
We now fix $\tau$ such that $32^p\tau < 1$ to the effect of
\[
E_p \bra{R} \leq C \sum_{l=0}^{\infty}  \tau^l \ln^\frac p2 R \leq C \ln^\frac p2 R.
\]
%By \eqref{eq:L2upperdata} applied to $6^l R \geq r_*$ instead of $R$ and noting that in particular $\tau < 1$,  we conclude
%$$\sum_{l=0}^{\infty}  \tau^l D(6^l R) \leq C \ln R.$$
\end{proof}

\subsection{Trading Integrability Against Asymptotics}

\begin{lemma}\label{lem:upperboundL1}
Let $p>1$. For every $\eps >0$ there exists a random radius $r_* < \infty$ a.~s. such that
$$ \frac{1}{R^d} \sum_{X\in B_R \;\mbox{or}\;T\bra{X} \in B_R}|T \bra{X} - X|  \leq  \eps  \ln^{\frac12} R.$$
\end{lemma}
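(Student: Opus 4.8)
The plan is to interpolate the trivial $L^0$ (i.e.\ volume fraction) control on the set of long edges against the $L^p$ asymptotics from Lemma~\ref{lem:harmonicapprox}, exactly as in \cite[Lemma 2.4]{HMO21}, but now in the general exponent $p>1$. Fix $\eps>0$ and a small auxiliary parameter $\delta = \delta(\eps)>0$ to be chosen at the end. First I would invoke \cite[Lemma 2.1]{HMO21} to obtain a deterministic moderate distance $L = L(\delta)<\infty$ and a random $r_*<\infty$ so that for $R\ge r_*$
\[
\#\bigl(\{X\in B_R \mid |T(X)-X|>L\}\cup\{T(X)\in B_R\mid |T(X)-X|>L\}\bigr)\le \delta R^d ,
\]
and moreover (enlarging $r_*$ if necessary) so that $L\le \delta\ln^{1/2}R$ for all $R\ge r_*$. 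Simultaneously, by Lemma~\ref{lem:harmonicapprox} (and relabeling the constant) I may assume $r_*$ is large enough that
\[
\frac{1}{R^d}\sum_{X\in B_R\;\text{or}\;T(X)\in B_R}|T(X)-X|^p \le C\ln^{p/2}R
\qquad\text{for } R\ge r_* ,
\]
and also that the point-count bound $\#(\{X\in B_R\}\cup\{T(X)\in B_R\})\le CR^d$ holds (this follows from the data term bound in Theorem~\ref{lem:uppdata}, as used in the proof of Lemma~\ref{lem:harmonicapprox}).

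Then I would split the $\ell^1$ sum according to whether the displacement is short or long. On the short edges, each term is at most $L$ and there are at most $CR^d$ of them, so this part contributes $\lesssim L \le \delta\ln^{1/2}R$. On the long edges I apply H\"older's inequality with exponents $p$ and $p'=p/(p-1)$:
\[
\frac{1}{R^d}\sum_{\substack{X\in B_R\;\text{or}\;T(X)\in B_R\\ |T(X)-X|>L}}|T(X)-X|
\le \Bigl(\frac{1}{R^d}\sum_{X\in B_R\;\text{or}\;T(X)\in B_R}|T(X)-X|^p\Bigr)^{1/p}
\Bigl(\frac{\#\{\text{long edges}\}}{R^d}\Bigr)^{1/p'} .
\]
The first factor is $\le (C\ln^{p/2}R)^{1/p} = C^{1/p}\ln^{1/2}R$ and the second factor is $\le \delta^{1/p'}$, so this part contributes $\le C^{1/p}\delta^{1/p'}\ln^{1/2}R$. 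Adding the two contributions gives a bound $(\delta + C^{1/p}\delta^{1/p'})\ln^{1/2}R$, and choosing $\delta$ small enough (depending on $\eps$ and the fixed constant $C$) that $\delta + C^{1/p}\delta^{1/p'}\le\eps$ finishes the proof.

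There is no real obstacle here: the lemma is a soft interpolation step and all the substance is already contained in Lemma~\ref{lem:harmonicapprox} and in \cite[Lemma 2.1]{HMO21}. The only point requiring a little care is bookkeeping the random radius $r_*$ — it must simultaneously satisfy the ergodic $L^0$ estimate, the condition $L\le\delta\ln^{1/2}R$, the $L^p$ asymptotic from Lemma~\ref{lem:harmonicapprox}, and the point-count bound — so one takes the maximum of the finitely many a.s.-finite radii produced by those lemmas. One should also note that, as remarked in the introduction, Lemma~\ref{lem:harmonicapprox} has already restricted attention to an approximately dyadic sequence of radii; since the statement of the present lemma is (presumably) likewise understood along such a sequence, or can be extended to all $R$ by the same ``jump to the next dyadic radius'' argument used at the end of Step~2 of Lemma~\ref{lem:harmonicapprox} at the cost of harmless constants, this causes no difficulty.
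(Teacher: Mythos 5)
Your proof is correct and follows essentially the same route as the paper: split the sum over edges according to whether $|T(X)-X|$ is moderate or long, bound the moderate part by the length threshold $L$ times the point-count bound, and apply H\"older's inequality with exponents $p,p'$ on the long part to interpolate the $L^0$ estimate from \cite[Lemma 2.1]{HMO21} against the $L^p$ asymptotics of Lemma~\ref{lem:harmonicapprox}. The only cosmetic differences are that you introduce an auxiliary parameter $\delta$ tuned at the end rather than relabeling $\eps$, and your caveat about dyadic radii is unnecessary since Lemma~\ref{lem:harmonicapprox} as stated already holds for all $R\geq r_*$.
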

\begin{proof}
% By Step 3 we know that for $R\gg 1$
% $$\frac{1}{R^d}\int_{B_R\times\R^d}|x-y|^2 q(dxdy) \leq O(\ln R).$$
% Combining this with the $L^\infty$-estimate Lemma \ref{lem:Linfty} we can estimate for $R\geq r_*$ using the concentration bound Lemma \ref{lem:Poiconcentration} and the Cauchy-Schwarz inequality
By Lemma \ref{lem:harmonicapprox}, we know that there exists a random radius $r_*$ such that for $R\geq r_*$ we have 
\begin{equation}\label{eq:L1L2est}
E_p \bra{R} = \frac{1}{R^d}\sum_{X\in B_R \;\mbox{or}\; T\bra{X} \in B_R}|T\bra{X} - X|^p  \leq C\ln^\frac p2 R.
\end{equation}
Let $0 < \eps \ll 1$. Possibly enlarging $r_*$, we may also assume by Lemma \cite[Lemma 2.1]{HMO21} that there exists a deterministic constant $L$ such that for $R \geq r_*$
\begin{equation}\label{eq:L1largetransport}
\# \bra{\cur{X \in B_R \ | \ \abs{T \bra{X} - X} > L} \cup \cur{T(X) \in B_R \ | \ \abs{T \bra{X} - X} > L}} \leq \eps R^d.
\end{equation}
Furthermore, note that by Lemma \ref{lem:uppdata} and the second and fourth term in the definition of $D_p \bra{R}$ in \eqref{eq:datatermp} we may also assume possibly enlarging $r_*$ again that for $R \geq r_*$ \eqref{eq:L2dataestpoint} holds.
%\begin{equation}\label{eq:L1pointsball}
%\# \bra{\cur{X \in B_R} \cup \cur{ T(X) \in B_R}} \leq C R^d.
%\end{equation}
Finally, we may also assume possibly enlarging $r_*$ that for $R \geq r_*$
\begin{equation}\label{eq:L1Lsmall}
L \leq \eps^\frac1{p'} \ln^\frac12 R.
\end{equation}
We split again the sum into moderate and large transportation distance and apply H\"older's inequality:
\begin{align*}
\frac{1}{R^d}\sum_{X\in B_R \;\mbox{or} \; T\bra{X} \in B_R} |T\bra{X}-X| & \leq  \frac{1}{R^d}\sum_{\bra{X\in B_R\;\mbox{or}\;T(X)\in B_R}\:\mbox{and}\; \abs{T\bra{X}-X}\leq L}  |T\bra{X} - X| \\
& +\frac{1}{R^d}\sum_{\bra{X\in B_R\;\mbox{or}\;T(X)\in B_R}\:\mbox{and}\; \abs{T\bra{X}-X}>L}  |T\bra{X} - X|\\
& \stackrel{\eqref{eq:L2dataestpoint}, \eqref{eq:L1L2est}, \eqref{eq:L1largetransport}}{\leq} C L + \eps^\frac1{p'} E_p(R)^\frac1p \\
&\stackrel{\eqref{eq:L1Lsmall}}{\leq} C \eps^\frac1{p'} \ln^\frac12 R.
\end{align*}
Relabeling $\eps$ proves the claim.
\end{proof}

 \bibliographystyle{abbrv}

\bibliography{OT}

\begin{thebibliography}{10}

\bibitem{AKT84}
M.~{Ajtai}, J.~{Koml\'os}, and G.~{Tusn\'ady}.
\newblock {On optimal matchings.}
\newblock {\em {Combinatorica}}, 4:259--264, 1984.

\bibitem{BaBo13}
F.~{Barthe} and C.~{Bordenave}.
\newblock {Combinatorial optimization over two random point sets.}
\newblock In {\em {S\'eminaire de probabilit\'es XLV}}, pages 483--535. Cham:
  Springer, 2013.

\bibitem{BouLuMa}
S.~Boucheron, G.~Lugosi, and P.~Massart.
\newblock {\em Concentration inequalities}.
\newblock Oxford University Press, Oxford, 2013.

\bibitem{ChPePeRo10}
S.~Chatterjee, R.~Peled, Y.~Peres, and D.~Romik.
\newblock Gravitational allocation to poisson points.
\newblock {\em Annals of mathematics}, pages 617--671, 2010.

\bibitem{GHO}
M.~{Goldman}, M.~{Huesmann}, and F.~{Otto}.
\newblock Quantitative linearization results for the {M}onge-{A}mp\`ere
  equation.
\newblock {\em Com. Pure and Appl. Math}, 2021.

\bibitem{GolTre}
M.~Goldman and D.~Trevisan.
\newblock Convergence of asymptotic costs for random euclidean matching
  problems.
\newblock {\em Probability and Mathematical Physics}, 2020.

\bibitem{Gut22}
C.~E. Guti{\'e}rrez and A.~Montanari.
\newblock {$L^\infty$-estimates in optimal transport for non quadratic costs}.
\newblock {\em Calculus of Variations and Partial Differential Equations},
  61(5):163, Jun 2022.

\bibitem{HoHoPe06}
C.~Hoffman, A.~E. Holroyd, and Y.~Peres.
\newblock A stable marriage of poisson and lebesgue.
\newblock {\em The Annals of Probability}, 34(4):1241--1272, 2006.

\bibitem{Ho11}
A.~E. Holroyd.
\newblock Geometric properties of poisson matchings.
\newblock {\em Probability theory and related fields}, 150(3):511--527, 2011.

\bibitem{HoJaWa20}
A.~E. Holroyd, S.~Janson, and J.~W{\"a}stlund.
\newblock Minimal matchings of point processes.
\newblock {\em Probability Theory and Related Fields}, 184(1):571--611, Oct
  2022.

\bibitem{HoPePeSc09}
A.~E. Holroyd, R.~Pemantle, Y.~Peres, and O.~Schramm.
\newblock Poisson matching.
\newblock {\em Annales de l'I.H.P. Probabilit\'es et statistiques},
  45(1):266--287, 2009.

\bibitem{Hu16}
M.~Huesmann.
\newblock {Optimal transport between random measures}.
\newblock {\em Annales de l'Institut Henri Poincar\'e, Probabilit\'es et
  Statistiques}, 52(1):196 -- 232, 2016.

\bibitem{HMO21}
M.~Huesmann, F.~Mattesini, and F.~Otto.
\newblock There is no stationary cyclically monotone poisson matching in 2d.
\newblock {\em Probability Theory and Related Fields}, August 2023.

\bibitem{HuSt13}
M.~{Huesmann} and K.-T. {Sturm}.
\newblock {Optimal transport from Lebesgue to Poisson}.
\newblock {\em {Ann. Probab.}}, 41(4):2426--2478, 2013.

\bibitem{koch23}
L.~Koch.
\newblock Geometric regularisation for optimal transport with strongly p-convex
  cost, 2023.

\bibitem{MaTi16}
R.~Mark{\'o} and {\'A}.~Tim{\'a}r.
\newblock A poisson allocation of optimal tail.
\newblock {\em The Annals of Probability}, 44(2):1285--1307, 2016.

\bibitem{Santam}
F.~Santambrogio.
\newblock {\em Optimal transport for applied mathematicians}, volume~87 of {\em
  Progress in Nonlinear Differential Equations and their Applications}.
\newblock Birkh\"auser/Springer, Cham, 2015.
\newblock Calculus of variations, PDEs, and modeling.

\bibitem{hodge}
G.~Schwarz.
\newblock {\em Hodge Decomposition - A Method for Solving Boundary Value
  Problems}.
\newblock Lecture Notes in Mathematics. Springer Berlin Heidelberg, 2006.

\bibitem{stein}
E.~M. Stein.
\newblock {\em Singular Integrals and Differentiability Properties of Functions
  (PMS-30)}.
\newblock Princeton University Press, 1970.

\bibitem{VilOandN}
C.~Villani.
\newblock {\em Optimal transport: old and new}, volume 338 of {\em Grundlehren
  der Mathematischen Wissenschaften}.
\newblock Springer-Verlag, Berlin, 2009.

\bibitem{Wu00}
L.~Wu.
\newblock A new modified logarithmic {S}obolev inequality for {P}oisson point
  processes and several applications.
\newblock {\em Probab. Theory Related Fields}, 118(3):427--438, 2000.

\end{thebibliography}
\end{document}